\newcommand{\tilo}{\widetilde 0} 
\theoremstyle{plain}
\newtheorem{theorem}{Theorem}[section]
\newtheorem{lemma}[theorem]{Lemma}
\newtheorem{proposition}[theorem]{Proposition}
\newtheorem{corollary}[theorem]{Corollary}
\newtheorem{definition}[theorem]{Definition} \theoremstyle{definition}
\newtheorem{example}[theorem]{Example}
\newtheorem{remark}[theorem]{Remark}
\newcommand{\duer}{\mathbin{\raisebox{3pt}{\varhexstar}\kern-3.70pt{\rule{0.15pt}{4pt}}}\,}
\newcommand{\R}{\mathbb{R}}
\newcommand{\mx}{\mathfrak{X}} \newcommand{\dr}{\mathbf{d}}
\newcommand{\ldr}[1]{{{\pounds}}_{#1}}
\DeclareMathOperator{\Hom}{Hom} 
\DeclareMathOperator{\Id}{Id}
\newcommand{\co}{\colon}
\newcommand{\Ga}{\Gamma}
\newcommand{\til}[1]{\widetilde{#1}}
\newcommand{\eps}{\varepsilon}
\newcommand{\nsp}[2]  {%
\langle\mspace{-6.8mu}%
\langle\mspace{-6.8mu}%
\langle\mspace{-6.8mu}%
\langle\mspace{-6.8mu}%
\langle\mspace{-6.8mu}%
\langle\mspace{-6.8mu}%
\langle{#1,\,}{#2}%
\rangle%
\mspace{-6.8mu}\rangle%
\mspace{-6.8mu}\rangle%
\mspace{-6.8mu}\rangle%
\mspace{-6.8mu}\rangle%
\mspace{-6.8mu}\rangle%
\mspace{-6.8mu}\rangle}
\newcommand{\VBgpd}{$\mathcal{V}\!\mathcal{B}$--group\-oid\xspace}
\newcommand{\LAgpds}{$\mathcal{L}\!\mathcal{A}$--group\-oids\xspace}
\newcommand{\LAvb}{$\mathcal{L}\!\mathcal{A}$--vector bundle\xspace}
\newcommand{\LAvbs}{$\mathcal{L}\!\mathcal{A}$--vector bundles\xspace}
\newcommand{\relphantom}[1]{\mathrel{\phantom{#1}}}
\begin{document}
\title[Double Lie algebroids and representations up to homotopy]{Double Lie 
algebroids\\ and representations up to homotopy}

\author{A. Gracia-Saz}
\address{Department of Mathematics, University of Toronto.}
\email{alfonso@math.toronto.edu}
\author{M. Jotz Lean}
\address{School of Mathematics and Statistics, The University of Sheffield.} 
\email{M.Jotz-Lean@sheffield.ac.uk}
\author{K.~C.~H. Mackenzie}
\address{School of Mathematics and Statistics, The University of Sheffield.} 
\email{K.Mackenzie@sheffield.ac.uk}
\author{R. A. Mehta}
\address{Department of Mathematics \& Statistics\\
44 College Lane\\
Northampton, MA 01063}
\email{rmehta@smith.edu}
\thanks{Research partially supported  by a \emph{fellowship for prospective researchers (PBELP2\_137534) of the
Swiss NSF} for a postdoctoral stay at UC Berkeley, and by a Dorothea-Schl\"ozer fellowship of the University of G\"ottingen.}

\subjclass[2010]{Primary: 53D17; 
Secondary: 17B66, 18D05.}

\begin{abstract}
  We show that double Lie algebroids, together with a chosen linear
  splitting, are equivalent to pairs of $2$-term representations up to
  homotopy satisfying compatibility conditions which extend the notion
  of matched pair of Lie algebroids. We discuss in detail the tangent
  of a Lie algebroid.
\end{abstract}
\maketitle

\tableofcontents


\section{Introduction}
\label{sect:intro}

Double Lie algebroids first arose as the infinitesimal form of double Lie 
groupoids \cite{Mackenzie92,Mackenzie00}. In the same way as the Lie theory 
of Lie groupoids and Lie algebroids expresses many of the basic 
infinitesimalization and integration results of differential geometry, 
the process of taking the double Lie algebroid of a double Lie groupoid 
captures two-stage differentiation processes, such as the iterated tangent 
bundle of a smooth manifold, and the relations between a Poisson Lie group, 
its Lie bialgebra and its symplectic double groupoid. 

The transition from a double Lie groupoid to its double Lie algebroid is
straightforward. To define an abstract concept of double Lie algebroid, 
however, is much more difficult, since there is no meaningful way in which 
a Lie algebroid bracket can be said to be a morphism with respect to another 
Lie algebroid structure. The solution ultimately found was to extend the 
duality between  Lie algebroids and Lie-Poisson structures to the double 
context, using the duality properties of double vector bundles \cite{Mackenzie11}. 
This definition was immediately given a simple and elegant reformulation 
in terms of super geometry and coordinates by Th.~Voronov \cite{Voronov12}. 
In terms of super geometry, a Lie algebroid structure on a vector bundle 
corresponds to a homological vector field $Q$ of weight $1$ on the 
parity-reversed bundle. A double vector bundle $D$ with Lie algebroid 
structures on both bundle structures on $D$ therefore involves two 
homological vector fields, of suitable weights, and the main 
compatibility condition of \cite{Voronov12} is that they commute. 

In the present paper we give a third formulation of double Lie algebroids, 
in terms of representations up to homotopy as defined in \cite{GrMe10a} and 
\cite{ArCr12}; this differs from the concept of \cite{EvLuWe99}.
In fact the representations up to homotopy which are relevant are concentrated 
in degrees $0$ and $1$ and we refer to them as \emph{$2$-representations} for 
brevity. Consider first a double vector bundle $D$ with Lie algebroid structures 
on two parallel sides, which are compatible with the vector bundle structures 
on the other sides; these are variously called \LAgpds or VB-groupoids, and 
are the `preliminary case' of double Lie algebroids in \cite{Mackenzie11}. 
Further suppose that $D$ is `decomposed'; that is, as a manifold it is the
fibre product $A\times _M B\times_M C$ of three vector bundles $A$, $B$, $C$ 
on the same base $M$, and the vector bundle structures on $D$ are the
pullbacks of $B\oplus C$ to $A$ and of $A\oplus C$ to $B$. Then \cite{GrMe10a} 
showed that VB-groupoid structures on $D$ are in bijective correspondence with 
$2$-representations defined in terms of $A$, $B$ and $C$. 

It is always possible to `decompose' a double vector bundle; 
that is, any double vector bundle is isomorphic to a decomposed double vector
bundle. Decompositions may be regarded as trivializations of $D$ at the double 
level; in this paper we do not need to trivialize $A$, $B$ and $C$. 
For a formulation in coordinate terms, see \cite{Voronov12}.

Now consider an arbitrary double Lie algebroid $D$. The Lie algebroid structures
on $D$ may be considered as a pair of VB-structures and accordingly a decomposition 
of $D$ expresses the double Lie algebroid structure as a pair of $2$-representations. 
Our main result (Theorem \ref{main}) determines the compatibility conditions between
these and, conversely, proves that a suitable pair of $2$-representations defines a 
double Lie algebroid structure on $D$. 

Our formulation is in some respects midway between the original formulation
and that of \cite{Voronov12}. Our treatment resembles that of Voronov inasmuch 
as the three intrinsic conditions of \cite{Mackenzie11} are replaced by a greater 
number of conditions which are dependent on auxiliary data, but are easier 
to work with. On the other hand, our methods are entirely `classical' rather 
than supergeometric, and rely on a global decomposition rather than local
coordinates.

Our formulation may also be regarded as a considerable generalization
of the description of a vacant double Lie algebroid in terms of a matched 
pair of representations \cite[\S6]{Mackenzie11} --- that is, of 
representations of Lie algebroids in the strict sense, 
without curvature. For this reason we regard the conditions (M1) to (M9) 
in Definition \ref{matched} as defining a matched pair of 2-representations. 

In turn, \cite{Jotz14b} will show that the bicrossproduct of a
  matched pair of 2-representations is a split Lie 2-algebroid, in the same way that the
  bicrossproduct of a matched pair of representations of Lie
  algebroids is a Lie
  algebroid \cite{Mokri97,Lu97}.
In a different direction, \cite{JoMa14b} will apply our main result
to show that double Lie algebroids 
which are transitive in a sense appropriate to the double structure are 
determined by a simple diagram of morphisms of ordinary Lie algebroids. 

We now describe the contents of the paper. 

In \S\ref{sect:b+d} we recall the basic notions needed throughout the paper. 
We begin with double vector bundles, the special classes of sections 
with which it is easiest to work, and the nonstandard pairing between their
duals. In \S\ref{subsect:VBa} we recall VB-algebroids and double Lie algebroids, 
and in \S\ref{subsect:ruths} we finally define $2$-representations.  

In \S\ref{sect:mte} we state our main result and the main work of the proof
is given in \S\ref{sect:proof}. 

We have included definitions of the key concepts required; in particular it
is not necessary to have a detailed knowledge of \cite{ArCr12}, \cite{GrMe10a}, 
\cite{Mackenzie11} or \cite{Voronov12}. 

\section{Background and definitions}
\label{sect:b+d}
\subsection{Double vector bundles, decompositions and dualization}
We briefly recall the definitions of double vector bundles, of their
\emph{linear} and \emph{core} sections, and of their \emph{linear
splittings} and \emph{lifts}. We refer to
\cite{Pradines77,Mackenzie05,GrMe10a} for more detailed treatments.

\begin{definition}
\label{df:dvb}
A \textbf{double vector bundle} is a commutative square
\begin{equation*}
\begin{xy}
\xymatrix{
D \ar[r]^{\pi_B}\ar[d]_{\pi_A}& B\ar[d]^{q_B}\\
A\ar[r]_{q_A} & M}
\end{xy}
\end{equation*}
satisfying the following four conditions:
\begin{enumerate}
\item all four sides are vector bundles;
\item $\pi_B$ is a vector bundle morphism over $q_A$;
\item $+_B: D\times_B D \rightarrow D$ is a vector bundle
morphism over $+: A\times_M A \rightarrow A$, where $+_B$ is
the addition map for the vector bundle $D\rightarrow B$, and 
\item the scalar multiplication $\R\times D\to D$ in the bundle
$D\to B$ is a vector bundle morphism over the scalar multiplication
$\R\times A\to A$. 
\end{enumerate}
\end{definition}

The corresponding statements for the operations in the bundle $D\to A$ follow. 

Given a double vector bundle $(D; A, B; M)$, the vector bundles $A$ and $B$ are 
called the \textbf{side bundles}. The \textbf{core} $C$ of a double vector bundle 
is the intersection of the kernels of $\pi_A$ and $\pi_B$. It has a natural vector 
bundle structure over $M$, the restriction of either structure on $D$, the projection 
of which we call $q_C: C \rightarrow M$. The inclusion $C \hookrightarrow D$ is 
usually denoted by
$$
C_m \ni c \longmapsto \overline{c} \in \pi_A^{-1}(0^A_m) \cap \pi_B^{-1}(0^B_m).
$$

Given a double vector bundle $(D;A,B;M)$, the space of sections $\Gamma_B(D)$ is
generated as a $C^{\infty}(B)$-module by two distinguished classes of
sections (see \cite{Mackenzie11}), the \emph{linear} and the
\emph{core sections} which we now describe.

\begin{definition}
For a section $c\colon  M \rightarrow C$, the corresponding \textbf{core
section} $c^\dagger\colon B \rightarrow D$ is defined as
\begin{equation}\label{core_section}
c^\dagger(b_m) = \tilde{0}_{\vphantom{1}_{b_m}} +_A \overline{c(m)}, \,\, m \in M, \, b_m \in B_m.
\end{equation}
\end{definition}

We denote the corresponding core section $A\to D$ by $c^\dagger$ also, relying on the
argument to distinguish between them. 

\begin{definition}
A section $\xi\in \Gamma_B(D)$ is called \textbf{linear} if $\xi\colon B
\rightarrow D$ is a bundle morphism from $B \rightarrow M$ to $D
\rightarrow A$ over a section $a\in\Gamma(A)$.
\end{definition} 
The space of core sections of $D$ over $B$ will be written
$\Gamma_B^c(D)$ and the space of linear sections $\Gamma^\ell_B(D)$.
Given $\psi\in \Gamma(B^*\otimes C)$, there is a linear section
$\til{\psi}\colon B\to D$ over the zero section $0^A\colon M\to A$ given by
\begin{equation}\label{core_morf}
\widetilde{\psi}(b_m) = \tilo_{b_m}+_A \overline{\psi(b_m)}.
\end{equation}
We call $\widetilde{\psi}$ a \textbf{core-linear section}. 

\begin{example}\label{trivial_dvb}
Let $A, \, B, \, C$ be vector bundles over $M$ and consider
$D=A\times_M B \times_M C$. With the vector bundle structures
$D=q^{!}_A(B\oplus C) \to A$ and $D=q_B^{!}(A\oplus C) \to B$, one
finds that $(D; A, B; M)$ is a double vector bundle called the
\textit{decomposed} or \textit{trivial double vector bundle with 
core $C$}. The core sections are given by
$$
c^\dagger\colon b_m \mapsto (0^A_m, b_m, c(m)), \text{ where } m \in M, \, b_m \in
B_m, \, c \in \Gamma(C),
$$
and similarly for $c^\dagger\colon A\to D$. 
The space of linear sections $\Gamma^\ell_B(D)$ is naturally identified
with $\Gamma(A)\oplus \Gamma(B^*\otimes C)$ via
$$
(a, \psi): b_m \mapsto (a(m), b_m, \psi(b_m)), \text{ where } \psi \in
\Gamma(B^*\otimes C), \, a\in \Gamma(A).
$$

In particular, the fibered product $A\times_M B$ is a double vector
bundle over the sides $A$ and $B$ and has core $M\times 0$.
\end{example}

\subsubsection{Linear splittings and lifts}
\label{subsub:lsl}
A \textbf{linear splitting}\footnote{Note that a linear splitting of
  $D$ is equivalent to a \textbf{decomposition} of $D$, i.e.~an
  isomorphism $\mathbb I\colon A\times_MB\times_MC\to D$ of double
  vector bundles over the identities on the sides and core. Given a
  linear splitting $\Sigma$, the corresponding decomposition $\mathbb
  I$ is given by $\mathbb I(a_m,b_m,c_m)=\Sigma(a_m,b_m)+_B(\tilde
  0_{\vphantom{1}_{b_m}} +_A \overline{c_m})$.  Given a decomposition
  $\mathbb I$, the corresponding linear splitting $\Sigma$ is given by
  $\Sigma(a_m,b_m)=\mathbb I(a_m,b_m,0^C_m)$.} of $(D; A, B; M)$ is an
injective morphism of double vector bundles $\Sigma\colon A\times_M
B\hookrightarrow D$ over the identity on the sides $A$ and $B$.
That every double vector bundle admits local linear
  splittings was proved by \cite{GrRo09}.  Local linear splittings are
  equivalent to double vector bundle charts. Pradines originally
  defined double vector bundles as topological spaces with an atlas of
  double vector bundle charts \cite{Pradines74a}. Using a partition of
  unity, he proved that (provided the double base is a smooth manifold)
  this implies the existence of a global double splitting
  \cite{Pradines77}. Hence, any double vector bundle in the sense of
  Definition \ref{df:dvb} admits a (global) linear splitting.  

A linear splitting $\Sigma$ of $D$ is equivalent to a splitting
$\sigma_A$ of the short exact sequence of $C^\infty(M)$-modules
\begin{equation*}
0 \longrightarrow \Gamma(B^*\otimes C) \hookrightarrow \Gamma^\ell_B(D) 
\longrightarrow \Gamma(A) \longrightarrow 0,
\end{equation*}
where the third map is the map that sends a linear section $(\xi,a)$ to
its base section $a\in\Gamma(A)$.  The splitting $\sigma_A$ will
be called a \textbf{lift}. Given $\Sigma$, the 
lift $\sigma_A\colon \Gamma(A)\to \Gamma_B^\ell(D)$ is given by
$\sigma_A(a)(b_m)=\Sigma(a(m), b_m)$ for all $a\in\Gamma(A)$ and
$b_m\in B$.

In the case of the tangent double of a vector bundle $E\to M$, the
lift from vector fields on $M$ to vector fields on $E$ (see \ref{tangent_double})
would be the horizontal lift corresponding to a connection. We avoid
the word `horizontal' here since `horizontal' and `vertical' refer to
the two structures on $D$.

By the symmetry of a linear splitting, this implies that a 
lift $\sigma_A\colon \Gamma(A)\to\Gamma_B^\ell(D)$ is equivalent to a
lift $\sigma_B\colon \Gamma(B)\to \Gamma_A^\ell(D)$.  Given a
lift $\sigma_A\colon\Gamma(A)\to\Gamma^\ell_B(D)$, the
corresponding lift
$\sigma_B\colon\Gamma(B)\to\Gamma^\ell_A(D)$ is given by
$\sigma_B(b)(a(m))=\sigma_A(a)(b(m))$ for all $a\in\Gamma(A)$,
$b\in\Gamma(B)$.

Note finally that two linear splittings $\Sigma^1,\Sigma^2\colon
A\times_MB\to D$ differ by a section $\Phi_{12}$ of $A^*\otimes
B^*\otimes C\simeq \operatorname{Hom}(A,B^*\otimes C)\simeq
\operatorname{Hom}(B,A^*\otimes C)$ in the following sense.  For each
$a\in\Gamma(A)$ the difference $\sigma_A^1(a)-_B\sigma_A^2(a)$ of
lifts is the core-linear section defined by
$\Phi_{12}(a)\in\Gamma(B^*\otimes C)$. By symmetry,  
$\sigma_B^1(b)-_A\sigma_B^2(b)=\widetilde{\Phi_{12}(b)}$ for each
$b\in\Gamma(B)$.

\subsubsection{The tangent double of a vector bundle}\label{tangent_double}
Let $q_E\colon E\to M$ be a vector bundle.  Then the tangent bundle
$TE$ has two vector bundle structures; one as the tangent bundle of
the manifold $E$, and the second as a vector bundle over $TM$. The
structure maps of $TE\to TM$ are the derivatives of the structure maps
of $E\to M$. The space $TE$ is a double vector bundle with core
bundle $E \to M$.
\begin{equation*}
\begin{xy}
\xymatrix{
TE \ar[d]_{Tq_E}\ar[r]^{p_E}& E\ar[d]^{q_E}\\
 TM\ar[r]_{p_M}& M}
\end{xy}
\end{equation*}

The core vector field corresponding to $e \in \Gamma(E)$ is the 
vertical lift $e^{\uparrow}: E \to TE$, i.e.~the vector field with flow
$\phi\colon E\times \R\to E$, $\phi_t(e'_m)=e'_m+te(m)$. An element of
$\Gamma^\ell_E(TE)=\mx^\ell(E)$ is called a \textbf{linear vector field}. It
is well-known (see e.g.~\cite{Mackenzie05}) that a linear vector field
$\xi\in\mx^l(E)$ covering $X\in\mx(M)$ corresponds to a derivation
$D^*\colon \Gamma(E^*) \to \Gamma(E^*)$ over $X\in \mx(M)$, and hence to 
a derivation $D\colon\Gamma(E)\to\Gamma(E)$ over $X\in \mx(M)$ (the dual
derivation). The precise correspondence is given by\footnote{Since its flow 
is a flow of vector bundle morphisms, a linear vector field sends linear 
functions to linear functions and pullbacks to pullbacks.}
\begin{equation}\label{ableitungen}
\xi(\ell_{\eps}) 
= \ell_{D^*(\eps)} \,\,\,\, \text{ and }  \,\,\, \xi(q_E^*f)= q_E^*(X(f))
\end{equation}
for all $\eps\in\Gamma(E^*)$ and $f\in C^\infty(M)$.  Here
$\ell_\eps$ is the linear function $E\to\R$ corresponding to $\eps$. We will
write $\widehat D$ for the linear vector field corresponding in this
manner to a derivation $D$ of $\Gamma(E)$.  The choice of a linear
splitting $\Sigma$ for $(TE; TM, E; M)$ is equivalent to the choice of
a connection on $E$: Since a linear splitting gives us a linear vector 
field $\sigma_{TM}(X)\in\mx^l(E)$ for each $X$, we can define
$\nabla\colon \mx(M)\times\Gamma(E)\to \Gamma(E)$ by
$\sigma_{TM}(X)=\widehat{\nabla_X}$ for all $X\in\mx(M)$.  Conversely,
a connection $\nabla\colon \mx(M)\times\Gamma(E)\to\Gamma(E)$ defines
a lift $\sigma_{TM}^\nabla$ for $(TE; TM, E; M)$ and a 
linear splitting $\Sigma^\nabla\colon TM\times_M E \to TE$.

We recall as well the relation between the connection and the Lie
bracket of vector fields on $E$.  Given $\nabla$, it is easy to see
using the equalities in \eqref{ableitungen} that, writing $\sigma$ for
$\sigma_{TM}^\nabla$:
\begin{equation}\label{Lie_bracket_VF}
\left[\sigma(X), \sigma(Y)\right]=\sigma[X,Y]-R_\nabla(X,Y)^\uparrow,\qquad
\left[\sigma(X), e^\uparrow\right]=(\nabla_Xe)^\uparrow,\qquad
\left[e^\uparrow,e'^\uparrow\right]=0,
\end{equation}
for all $X,Y\in\mx(M)$ and $e,e'\in\Gamma(E)$.  That is, the Lie
bracket of vector fields on $M$ and the connection encode completely
the Lie bracket of vector fields on $E$.

\medskip

Now let us have a quick look at the other structure on the double vector
bundle $TE$. The lift
$\sigma_{E}^\nabla\colon\Gamma(E)\to\Gamma_{TM}^\ell(TE)$ is given by
\begin{equation}
\sigma_{E}^\nabla(e)(v) = Te(v) +_{TM} (T0^E(v) -_E \overline{\nabla_v e}), \,\, v \in TM, \, e \in \Gamma(E).
\end{equation}

\subsubsection{Dualization and lifts}
Recall that double vector
bundles can be dualized in two distinct ways.
We denote the dual of $D$ as a vector bundle over $A$ by $D\duer A$ and likewise
for $D\duer B$. The dual $D\duer A$ is itself a double 
 vector bundle, with side bundles $A$ and $C^*$ and core $B^*$ \cite{Mackenzie99,Mackenzie11}.
$$ 
{\xymatrix{
D\ar[r]^{\pi_B}\ar[d]_{\pi_A}&   B\ar[d]^{q_{B}}\\
A\ar[r]_{q_A}                   &  M\\
}}
\qquad\qquad
{\xymatrix{
D\duer A \ar[r]^{\pi_{C^*}}\ar[d]_{\pi_A}&   C^*\ar[d]^{q_{C^*}}\\
A\ar[r]_{q_{A}}                   &  M\\
}}
\qquad\qquad
{\xymatrix{
D\duer B \ar[r]^{\pi_B}\ar[d]_{\pi_{C^*}}&   B\ar[d]^{q_B}\\
C^*\ar[r]_{q_{C^*}}                   &  M\\
}}
$$ 
Note also that by dualizing again $D\duer B$ over $C^*$, we get
\[\xymatrix{
D\duer B\duer C^* \ar[r]^{\pi_A}\ar[d]_{\pi_{C^*}}&   A\ar[d]^{q_{A}}  \\
C^*\ar[r]_{q_{C^*}}                   &  M,\\
}\]
with core $B^*$. In the same manner, we get a double vector bundle
$D\duer A\duer C^*$ with sides $B$ and $C^*$ and core $A^*$.

Recall first of all that the vector bundles $D\duer B\to C^*$ and
$D\duer A\to C^*$ are, up to a sign, naturally in duality to each
other \cite{Mackenzie05}. The pairing
\[ \nsp{\cdot\,}{\cdot} \colon (D\duer A)\times_{C^*} (D\duer B)\to \mathbb R
\]  
is defined as follows: 
for $\Phi\in D\duer A$ and $\Psi\in D\duer B$ projecting to the same element
$\gamma_m$ in $C^*$, choose $d\in D$ with $\pi_A(d)=\pi_A(\Phi)$ and
$\pi_B(d)=\pi_B(\Psi)$.  Then $\langle \Phi, d\rangle_A-\langle \Psi,d\rangle_B$ 
does not depend on the choice of $d$ and we set
$\nsp{\Phi}{\Psi}=\langle \Phi, d\rangle_A-\langle \Psi,d\rangle_B$.
 
This implies in particular that $D\duer A$ is canonically (up to a
sign) isomorphic to $D\duer B\duer C^*$ and $D\duer B$ is isomorphic
to $D\duer A\duer C^*$. We will use this below.

\medskip

Each linear section $\xi\in\Gamma_B(D)$ over $a\in\Gamma(A)$ induces a
linear section $\xi^\sqcap\in \Gamma_{C^*}^\ell(D\duer B\duer C^*)$ over
$a$. Namely $\xi$ induces a function $\ell_\xi\colon D\duer B\to\R$
which is fibrewise-linear over $B$ and, using the definition of the 
addition in $D\duer B\to C^*$ (\cite[Equation (7)]{Mackenzie11}, it follows
that $\ell_\xi$ is also linear over $C^*$. The corresponding section 
of $D\duer B\duer C^*\to C^*$ is denoted $\xi^\sqcap$ \cite{Mackenzie11}. 
Thus
\begin{equation}
\label{eq:xisqcap-new}
\nsp{\xi^\sqcap(\gamma)}{\Psi} 
= \ell_\xi(\Psi)
= \langle\Psi,\, \xi(b)\rangle_B
\end{equation}
for $\Psi\in D\duer B$ such that $\pi_B(\Psi)=b$ and $\pi_{C^*}(\Psi)=\gamma$. 

Given a linear splitting $\Sigma\colon A\times_M B\to D$ of $D$, we
get hence a linear splitting 
\newline
$\Sigma^{\star,B}\colon C^*\times_M A\to
D\duer B\duer C^*$, defined by the corresponding lift
$\sigma_{A}^{\star,B}\colon \Gamma(A)\to\Gamma_{C^*}^\ell(D\duer B\duer C^*)$:
\[
\sigma_{A}^{\star,B}(a)=(\sigma_A(a))^\sqcap
\]
for all $a\in\Gamma(A)$.

We now use the (canonical up to a sign) isomorphism of $D\duer A$ with
$D\duer B\duer C^*$ to construct a canonical linear splitting of
$D\duer A$ given a linear splitting of $D$.  We identify $D\duer A$ with 
$D\duer B\duer C^*$ using $-\nsp{\cdot\,}{\cdot}$. Thus we define the
lift $\sigma^\star_A\colon \Gamma(A)\to\Gamma_{C^*}^\ell(D\duer A)$ by 
\begin{equation}\label{iso_sign}
\nsp{\sigma_A^\star(a)}{\cdot} = -\sigma_A^{\star,B}(a)
\end{equation}
for all $a\in\Gamma(A)$.  Note that by \eqref{eq:xisqcap-new}, this implies that 
$\langle \sigma_A^\star(a)(\gamma), \sigma_A(a)(b)\rangle_A=0$
for all $\gamma \in C^*$ and $b\in B$.
The choice of sign in \eqref{iso_sign} is necessary for
$\sigma_A^\star(a)$ to be a linear section of $D\duer A$ over $a$.  To
be more explicit, check or recall from \cite[Equation (28), p.352]{Mackenzie05}
that $\nsp{\sigma_A^\star(a)}{\alpha^\dagger} =- \langle \alpha, a\rangle$
for all $\alpha\in\Gamma(A^*)$ (and $\alpha^\dagger$ the corresponding core
section of $D\duer B$ over $C^*$). But $\langle \sigma_A^{\star,B}(a),
\alpha^\dagger\rangle_{C^*}=q_{C^*}^*\langle a, \alpha\rangle$ by definition
of the pairing of  $D\duer B\duer C^*$ with $D\duer B$. Hence, without the
choice of sign that we make, $\sigma_A^\star(a)$ would be linear over $-a$, hence 
\emph{not} a lift.

By (skew-)symmetry, given the lift
$\sigma_B\colon\Gamma(B)\to \Gamma^\ell_B(D)$, we identify $D\duer B$
with $D\duer A\duer C^*$ using $\nsp{\cdot}{\cdot}$ and
define the lift $\sigma^\star_B\colon
\Gamma(B)\to\Gamma_{C^*}^\ell(D\duer B)$ by $\nsp{\sigma_B^\star(b)}{\cdot} 
= \sigma_B^{\star,A}(b)$ for all $b\in\Gamma(B)$.  (This time, we do not need 
the minus sign.)  As a summary, we have the equations:
\begin{equation}
\nsp{\sigma_A^\star(a)}{\sigma_B^\star(b)}=0, \qquad
\nsp{\sigma_A^\star(a)}{\alpha^\dagger} = -q_{C^*}^*\langle\alpha, a\rangle,\qquad
\nsp{\beta^\dagger}{\sigma_B^\star(b)} = q_{C^*}^*\langle\beta, b\rangle,
\end{equation}
for all $a\in\Gamma(A)$, $b\in\Gamma(B)$, $\alpha\in\Gamma(A^*)$ and $\beta\in\Gamma(B^*)$.

\subsection{VB-algebroids and double Lie algebroids}
\label{subsect:VBa}

What we are here calling VB-algebroids were defined in
\cite{Mackenzie98x,Mackenzie11} and called \LAvbs.  \footnote{The terminology
`\LAvb' followed that of \LAgpds, which were defined in \cite[\S4]{Mackenzie92}, 
on the model of Pradines' \cite{Pradines88} concept of \VBgpd. In \cite{Mackenzie98x} 
and \cite[3.3]{Mackenzie11} \LAvbs were seen as a special case of \LAgpds. 
The terminology `VB-algebroid' of \cite{GrMe10a} distingushes the equivalent
formulation in terms of bracket conditions on the linear and core sections.}

Let $(D; A, B; M)$ be a double vector bundle 
$$
\xymatrix{
D \ar[r]^{\pi_B}\ar[d]_{\pi_A}&   B\ar[d]^{q_B}\\
A\ar[r]_{q_A}                   &  M\\
}
$$ with core $C$.

Then $(D;A,B;M)$ is a VB-algebroid (\cite{Mackenzie98x}; see also
\cite{GrMe10a}) if there are Lie algebroid structures on $D\to B$ and
$A\to M$, such that the vector bundle operations in $D\to A$ are Lie
algebroid morphisms over the corresponding operations in $B\to M$. The
anchor $\Theta_B\colon D\to TB$ is a morphism of double vector bundles
and we denote the induced map of the cores by $\partial\colon C\to B$.
Equivalently \cite{GrMe10a}, $(D \to B; A \to M)$ is a VB-algebroid if
$D \to B$ is a Lie algebroid, the anchor $\Theta_D\colon D \to TB$ is a
bundle morphism over $\rho_A: A \to TM$ and the Lie bracket is linear:
\begin{equation*} 
[\Gamma^\ell_B(D), \Gamma^\ell_B(D)] \subset \Gamma^\ell_B(D), \qquad 
[\Gamma^\ell_B(D), \Gamma^c_B(D)] \subset \Gamma^c_B(D), \qquad 
[\Gamma^c_B(D), \Gamma^c_B(D)]= 0.
\end{equation*}
The vector bundle $A\to M$ is then also a Lie algebroid, with anchor
$\rho_A$ and bracket defined as follows: if $\xi_1,
\xi_2\in\Gamma^\ell_B(D)$ are linear over $a_1,a_2\in\Gamma(A)$, then
the bracket $[\xi_1,\xi_2]$ is linear over $[a_1,a_2]$.

\begin{example}\label{td}
The tangent double $(TE;E,TM;M)$ has a VB-algebroid structure $(TE\to E, TM\to M)$.
\end{example}

\medskip

If $D$ is a VB-algebroid with Lie algebroid structures on $D\to B$ and
$A\to M$ the dual vector bundle $D\duer B\to B$ has a
\emph{Lie-Poisson structure} (a linear Poisson structure), and the
structure on $D\duer B$ is also Lie-Poisson with respect to $D\duer
B\to C^*$ \cite[3.4]{Mackenzie11}. Dualizing this bundle gives a Lie
algebroid structure on $D\duer B\duer C^*\to C^*$. This equips the
double vector bundle $(D\duer B\duer C^*; C^*,A;M)$ with a
VB-algebroid structure. Using the isomorphism defined by
$-\nsp{\cdot}{\cdot}$, the double vector bundle $(D\duer
A;A,C^*;M)$ also has a VB-algebroid structure.

\begin{definition}[\cite{Mackenzie11}]
A \textbf{double Lie algebroid} is a double vector 
bundle $(D;A,B;M)$ with core denoted $C$, and 
with Lie algebroid structures on each of $A\to M$,
$B\to M$, $D\to A$ and $D\to B$ such that each pair of parallel Lie
algebroids gives $D$ the structure of a VB-algebroid, and such that
$(D\duer A\duer C^*, D\duer B\duer C^*)$ with the induced Lie
algebroid structures on base $C^*$ as defined above, is a Lie 
bialgebroid. 
\end{definition}

Equivalently, $D$ is a double Lie algebroid if the pair $(D\duer A, D\duer B)$ 
with the induced Lie algebroid structures on base $C^*$ and the pairing
$\nsp{\cdot}{\cdot}$, is a Lie bialgebroid. One aim of this paper
is to reformulate this definition in terms of specific classes of
sections, so as to allow the user to bypass frequent use of the 
duality of doubles; see Theorem \ref{main}. 

\subsection{Representations up to homotopy and VB-algebroids}
\label{subsect:ruths}
Let $A\to M$ be a Lie algebroid and consider an $A$-connection 
$\nabla$ on a vector bundle $E\to M$.
Then the space $\Omega^\bullet(A,E)$ of $E$-valued Lie algebroid forms has an induced 
operator $\dr_\nabla$ given by the Koszul formula:
\begin{equation*}
\begin{split}
\dr_\nabla\omega(a_1,\ldots,a_{k+1})=&\sum_{i<j}(-1)^{i+j}\omega([a_i,a_j],a_1,\ldots,\hat a_i,\ldots,\hat a_j,\ldots, a_{k+1})\\
&\qquad +\sum_i(-1)^{i+1}\nabla_{a_i}(\omega(a_1,\ldots,\hat a_i,\ldots,a_{k+1}))
\end{split}
\end{equation*}
for all $\omega\in\Omega^k(A,E)$ and $a_1,\ldots,a_{k+1}\in\Gamma(A)$.

Let now $\mathcal E= \bigoplus_{k\in \mathbb{Z}} E_k[k]$ be a 
graded vector bundle. Consider the space $\Omega(A,\mathcal E)$ with grading given by
$$
\Omega(A,\mathcal E)^k = \bigoplus_{i+j=k}\Omega^i(A, E_j).
$$

\begin{definition}\cite{ArCr12}
  A \emph{representation up to homotopy of $A$ on $\mathcal E$} is a map
  $\mathcal D\colon \Omega(A, \mathcal E) \to \Omega(A,\mathcal E)$ with total degree
  $1$ and such that $\mathcal D^2=0$ and
$$
\mathcal D(\alpha \wedge \omega) = \dr_A\alpha \wedge \omega +
(-1)^{|\alpha|} \alpha \wedge \mathcal D(\omega), \text{ for } \alpha
\in \Gamma(\wedge A^*), \, \omega \in \Omega(A,\mathcal E),
$$
where $\dr_A\colon \Gamma(\wedge A^*) \to \Gamma(\wedge A^*)$ is the
Lie algebroid differential.
\end{definition}
Note that Gracia-Saz and Mehta
\cite{GrMe10a} defined this concept 
independently and called them
``superrepresentations''. 

Let $A$ be a Lie algebroid. The representations up to homotopy which
we will consider are always on graded vector bundles $\mathcal E=
E_0\oplus E_1$ concentrated on degrees 0 and 1, so
called \emph{$2$-term graded vector bundles}.  These representations
are equivalent to the following data (see \cite{ArCr12,GrMe10a}):
\begin{enumerate}
\item [(1)] a map $\partial\colon E_0\to E_1$,
\item [(2)] two $A$-connections, $\nabla^0$ and $\nabla^1$ on $E_0$
  and $E_1$, respectively, such that $\partial \circ \nabla^0 =
  \nabla^1 \circ \partial$, \item [(3)] an element $R \in \Omega^2(A,
  \Hom(E_1, E_0))$ such that $R_{\nabla^0} = R\circ \partial$,
  $R_{\nabla^1}=\partial \circ R$ and $\dr_{\nabla^{\Hom}}\omega=0$,
  where $\nabla^{\Hom}$ is the connection induced on $\Hom(E_1,E_0)$
  by $\nabla^0$ and $\nabla^1$.
\end{enumerate}
We will call such a 2-term representation up to homotopy a
\textbf{2-representation} for brevity.
\medskip

Let $(D \to B; A \to M)$ be a VB-algebroid. Then since the
  anchor $\rho_D$ is linear, it sends a core section $c^\dagger$,
  $c\in\Gamma(C)$ to a vertical vector field on $B$.  This defines the
  \textbf{core-anchor} $\partial_B\colon C\to B$ given by,
  $\rho_D(c^\dagger)=(\partial_Bc)^\uparrow$ for all $c\in\Gamma(C)$
  (see \cite{Mackenzie92}).

Choose further a linear splitting $\Sigma\colon A\times_MB\to
D$. Since the anchor of a linear section is linear, for each $a\in
\Gamma(A)$ the vector field $\rho_D(\sigma_A(a))$ defines a derivation
of $\Gamma(B)$ with symbol $\rho(a)$ (see \S
\ref{tangent_double}). This defines a linear connection
$\nabla^{AB}\colon \Gamma(A)\times\Gamma(B)\to\Gamma(B)$:
\[\rho_D(\sigma_A(a))=\widehat{\nabla_a^{AB}}\]
for all $a\in\Gamma(A)$.  Since the bracket of a linear section with a core
section is again linear, we find a linear connection
$\nabla^{AC}\colon\Gamma(A)\times\Gamma(C)\to\Gamma(C)$ such
that \[[\sigma_A(a),c^\dagger]=(\nabla_a^{AC}c)^\dagger\] for all
$c\in\Gamma(C)$ and $a\in\Gamma(A)$.  The difference
$\sigma_A[a_1,a_2]-[\sigma_A(a_1), \sigma_A(a_2)]$ is a core-linear
section for all $a_1,a_2\in\Gamma(A)$.  This defines a vector valued
Lie algebroid form $R\in\Omega^2(A,\operatorname{Hom}(B,C))$ such that
\[[\sigma_A(a_1), \sigma_A(a_2)]=\sigma_A[a_1,a_2]-\widetilde{R(a_1,a_2)},
\]
for all  $a_1,a_2\in\Gamma(A)$. See \cite{GrMe10a} for more details on these constructions.

The following theorem is proved in \cite{GrMe10a}.
\begin{theorem}\label{rajan}
  Let $(D \to B; A \to M)$ be a VB-algebroid and choose a linear
  splitting $\Sigma\colon A\times_MB\to D$.  The triple
  $(\nabla^{AB},\nabla^{AC},R)$ defined as above is a $2$-representation of
  $A$ on the complex $\partial_B\colon C\to B$.

 Conversely, let $(D;A,B;M)$ be a double vector bundle
such that $A$ has a
  Lie algebroid structure and choose a
  linear splitting $\Sigma\colon A\times_MB\to D$. Then if $(\nabla^{AB},\nabla^{AC},R)$ is a
  2-representation of $A$ on a complex $\partial_B\colon C\to B$, then
  the four equations above define a VB-algebroid structure on $(D\to
  B; A\to M)$.

\end{theorem}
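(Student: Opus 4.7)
The plan is to prove both directions by exploiting the fact that $\Gamma_B(D)$ is generated over $C^\infty(B)$ by linear and core sections, so that the VB-algebroid structure (bracket and anchor) is determined by its values on these two classes, and the four defining equations cover all four possible combinations: linear-linear, linear-core, core-core, and anchors of each.

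For the forward direction, I would assume the VB-algebroid structure and extract the 2-representation axioms. The identity $\partial_B\circ\nabla^{AC}=\nabla^{AB}\circ\partial_B$ comes from applying the anchor to $[\sigma_A(a),c^\dagger]=(\nabla_a^{AC}c)^\dagger$ and using the formula \eqref{Lie_bracket_VF} to compute $[\widehat{\nabla_a^{AB}},(\partial_Bc)^\uparrow]=(\nabla_a^{AB}\partial_Bc)^\uparrow$. The two curvature-type conditions $R_{\nabla^{AB}}=\partial_B\circ R$ and $R_{\nabla^{AC}}=R\circ\partial_B$ come respectively from applying the anchor (and reading off the vertical vector field) to the Jacobi identity for $[\sigma_A(a_1),\sigma_A(a_2),\sigma_A(a_3)]$, and from expanding the Jacobi identity for $[\sigma_A(a_1),\sigma_A(a_2),c^\dagger]$ and separating the linear and core components using the definitions of $\nabla^{AC}$ and $R$. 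The cocycle condition $\dr_{\nabla^{\mathrm{Hom}}}R=0$ is precisely the remaining (linear) component of the triple-linear Jacobi identity, once the core part has given $R_{\nabla^{AB}}=\partial_B\circ R$. Compatibility of the anchor with the bracket and the Leibniz rule on $C^\infty(B)$ give the remaining $A$-connection axioms (symbol $\rho_A(a)$, Leibniz in $a$).

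For the converse, I would define a candidate bracket on $\Gamma_B(D)$ by the four displayed equations and by Leibniz extension, and a candidate anchor $\rho_D$ by $\rho_D(\sigma_A(a))=\widehat{\nabla_a^{AB}}$ and $\rho_D(c^\dagger)=(\partial_Bc)^\uparrow$. One first checks that these are well-defined as $C^\infty(B)$-Leibniz extensions and that $\rho_D$ is a bundle map into $TB$. The heart of the argument is then the Jacobi identity: verifying it on each of the four triples (three linear, two linear and one core, one linear and two core, three core) reduces, after using the linearity/core-linearity of the outputs, to exactly the four axioms (a)--(d) of a 2-representation plus the Jacobi identity of $A$ and the compatibilities of $\nabla^{AB}$ with $\rho_A$. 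The linear bracket conditions $[\Gamma_B^\ell(D),\Gamma_B^\ell(D)]\subset\Gamma_B^\ell(D)$, etc., are immediate from the defining equations, so the VB-algebroid characterization of \cite{GrMe10a} then applies.

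The main obstacle is the bookkeeping for the triple-Jacobi identity in the converse, particularly the purely linear case: one must show that the three contributions of the form $\sigma_A[[a_i,a_j],a_k]$ cancel by the Jacobi identity on $A$, while the three contributions $\widetilde{R(\cdot,\cdot)}$ and the three nested brackets $[\sigma_A(a_i),\widetilde{R(a_j,a_k)}]$ (which themselves expand via $\nabla^{AC}$ acting on $\mathrm{Hom}(B,C)$) reassemble into $\dr_{\nabla^{\mathrm{Hom}}}R=0$ on the core-linear side and into $R_{\nabla^{AB}}=\partial_B\circ R$ on the vertical/anchor side. Keeping track of signs and identifying the induced connection on $\mathrm{Hom}(B,C)$ with the one used in the 2-representation axiom is the delicate point; beyond this, everything is a direct, if lengthy, verification that the reader can check by substituting the defining formulas and using \eqref{Lie_bracket_VF} together with the Leibniz rules for the various connections.
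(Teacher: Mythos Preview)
The paper does not prove this theorem; it simply cites \cite{GrMe10a}. Your outline is essentially the argument given there: determine the bracket and anchor on linear and core sections, and match the Lie algebroid axioms against the 2-representation axioms case by case.

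One correction to your bookkeeping in the forward direction: the identity $R_{\nabla^{AB}}=\partial_B\circ R$ does \emph{not} come from the Jacobi identity on three linear sections. It comes directly from the anchor-bracket compatibility applied to $[\sigma_A(a_1),\sigma_A(a_2)]=\sigma_A[a_1,a_2]-\widetilde{R(a_1,a_2)}$: the anchor of $\widetilde{R(a_1,a_2)}$ is the vertical linear vector field corresponding to the endomorphism $\partial_B\circ R(a_1,a_2)$, while the difference $[\widehat{\nabla_{a_1}^{AB}},\widehat{\nabla_{a_2}^{AB}}]-\widehat{\nabla_{[a_1,a_2]}^{AB}}$ is the vertical linear field for $R_{\nabla^{AB}}(a_1,a_2)$. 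The Jacobi identity on three linear sections gives only a core-linear equation, namely $\dr_{\nabla^{\mathrm{Hom}}}R=0$; there is no separate ``core part'' yielding the curvature relation. With this reallocation (anchor compatibility on linear-linear $\Rightarrow R_{\nabla^{AB}}=\partial_B\circ R$; anchor compatibility on linear-core $\Rightarrow \partial_B\nabla^{AC}=\nabla^{AB}\partial_B$; Jacobi on two linear plus one core $\Rightarrow R_{\nabla^{AC}}=R\circ\partial_B$; Jacobi on three linear $\Rightarrow \dr_{\nabla^{\mathrm{Hom}}}R=0$), your sketch is correct and matches the proof in \cite{GrMe10a}.
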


 The following formulas
for the brackets of linear and core sections with core-linear
  sections will be very useful in the proof of our main theorem. In
the situation of the previous theorem, we have
\begin{equation}\label{bracket_core_linear}
\left[\sigma_A(a),\widetilde\phi\right]=\widetilde{\nabla_a^{\rm Hom}\phi}
\end{equation}
and 
\begin{equation}\label{bracket_core_linear_2}
\left[c^\dagger,\widetilde\phi\right]=(\phi(\partial_Bc))^\dagger
\end{equation}
for all $a\in\Gamma(A)$, $\phi\in\Gamma(\operatorname{Hom}(B,C))$ and
$c\in\Gamma(C)$.
To see this, write $\phi$ as $\sum f_{ij}\cdot\beta_i\cdot c_j$ with
$f_{ij}\in C^\infty(M)$, $\beta_i\in\Gamma(B^*)$ and
$c_j\in\Gamma(C)$.
Then $\widetilde\phi=\sum q_B^*f_{ij}\cdot\ell_{\beta_i}\cdot
c_j^\dagger$ and one can use the formulas in Theorem \ref{rajan} and
the Leibniz rule to compute the brackets with $\sigma_A(a)$ and $c^\dagger$.

Note that (\ref{bracket_core_linear}) and (\ref{bracket_core_linear_2}) can also be
proved by diagrammatic methods.

\begin{example}\label{double_ruth}
  Choose a linear connection
  $\nabla\colon\mx(M)\times\Gamma(E)\to\Gamma(E)$ and consider the
  corresponding linear splitting $\Sigma^\nabla$ of $TE$ as in Section
  \ref{tangent_double}.  The description of the Lie bracket of vector
  fields in \eqref{Lie_bracket_VF} shows that the 2-representation
  induced by $\Sigma^\nabla$ is the 2-representation of $TM$ on
  $\Id_E\colon E\to E$ given by $(\nabla,\nabla,R_\nabla)$.
\end{example}

\begin{remark}\label{change}
If $\Sigma_1,\Sigma_2\colon A\times_M B\to D$ are two linear
splittings of a VB-algebroid $(D\to B, A\to M)$, then the two corresponding 2-representations are related
by the following identities \cite{GrMe10a}.
\[\nabla^{B,2}_a=\nabla^{B,1}_a+\partial_B\circ \Phi_{12}(a), 
\quad \nabla^{C,2}_a=\nabla^{C,1}_a+\Phi_{12}(a)\circ\partial_B\]
and
\[R^2(a_1,a_2)=R^1(a_1,a_2)+(\dr_{\nabla^{\operatorname{Hom}(B,C)}}\Phi_{12})(a_1,a_2)
+\Phi_{12}(a_1)\partial_B\Phi_{12}(a_2)-\Phi_{12}(a_2)\partial_B\Phi_{12}(a_1)
\]
for all $a,a_1,a_2\in\Gamma(A)$.
\end{remark}

\subsubsection{Dualization and $2$-representations}
Let $(D;A,B;M)$ be a VB-algebroid with Lie algebroid structures on
$D\to B$ and $A\to M$.  Let $\Sigma\colon A\times_MB\to D$ be a linear
splitting of $D$ and denote by $(\nabla^B,\nabla^C,R)$ the
2-representation of the Lie algebroid $A$ on $\partial_B\colon C\to
B$. We have seen above that $(D\duer A;A,C^*;M)$ has an induced
VB-algebroid structure, and we have shown that the linear splitting
$\Sigma$ induces a natural linear spliting $\Sigma^\star\colon
A\times_M C^*\to D\duer B$ of $D\duer A$.  The 2-representation of $A$
that is associated to this splitting is then
$({\nabla^C}^*,{\nabla^B}^*,-R^*) $ on the complex $\partial_B^*\colon
B^*\to C^*$. This is easy to verify, and proved in the
appendix\footnote{The construction of the ``dual'' linear splitting of
  $D\duer A$, given a linear splitting of $D$, is done in
  \cite{DrJoOr13} by dualizing the decomposition and taking its
  inverse. The resulting linear splitting of $D\duer A$ is the same.}
of \cite{DrJoOr13}. One only needs to recall for the proof that, by
construction, $\ell_{\sigma_A^\star(a)}$ equals $\ell_{\sigma_A(a)}$
as a function on $D\duer B$.

\subsubsection{The tangent of a Lie algebroid}\label{tangent_double_al}
Let $(A\to M,\rho,[\cdot\,,\cdot])$ be a Lie algebroid. Then the
tangent $TA\to TM$ has a Lie algebroid structure with bracket defined
by $[Ta_1, Ta_2]=T[a_1,a_2]$, $[Ta_1, a_2^\dagger]=[a_1,a_2]^\dagger$
and $[a_1^\dagger, a_2^\dagger]=0$ for all $a_1,a_2\in\Gamma(A)$. The
anchor of $Ta$ is $\widehat{[\rho(a),\cdot]}\in\mx(TM)$ and the anchor
of $a^\dagger$ is $\rho(a)^\uparrow$ for all $a\in\Gamma(A)$.  This
defines a VB-algebroid structure $(TA\to TM; A\to M)$ on
$(TA;TM,A;M)$.

Given a $TM$-connection on $A$, and so a linear splitting
$\Sigma^\nabla$ of $TA$ as in Section \ref{tangent_double}, the
2-representation of $A$ on $\rho\colon A\to TM$ encoding the
VB-algebroid is $(\nabla^{\rm bas},\nabla^{\rm bas}, R_\nabla^{\rm
  bas})$, where the connections are defined by
\begin{equation*}
\begin{split}
\nabla^{\rm bas}&\colon \Gamma(A)\times\mx(M)\to\mx(M), \\
\nabla^{\rm
    bas}_aX&=[\rho(a), X]+\rho(\nabla_Xa)
\end{split}
\end{equation*} 
and 
\begin{equation*}
\begin{split}
  \nabla^{\rm bas}&\colon
  \Gamma(A)\times\Gamma(A)\to\Gamma(A), \\
  \nabla^{\rm bas}_{a_1}a_2&=[a_1,a_2]+\nabla_{\rho(a_2)}a_1,
\end{split}
\end{equation*}  and $R_\nabla^{\rm
    bas}\in \Omega^2(A,\operatorname{Hom}(TM,A))$ is given by
  \[R_\nabla^{\rm
    bas}(a_1,a_2)X=-\nabla_X[a_1,a_2]+[\nabla_Xa_1,a_2]+[a_1,\nabla_Xa_2]+\nabla_{\nabla^{\rm
      bas}_{a_2}X}a_1-\nabla_{\nabla^{\rm bas}_{a_1}X}a_2
\]
for all $X\in\mx(M)$, $a, a_1,a_2\in\Gamma(A)$.

\section{Main theorem and examples}
\label{sect:mte}
We define in this section the notion of \emph{matched pair of
representations up to homotopy} --- as above, we consider
only representations up to homotopy which are concentrated in
degrees $0$ and $1$; that is, $2$-representations. We then state 
our main result: a double vector bundle endowed with two VB-algebroid 
structures (on each of its sides) is a double Lie algebroid if and 
only if, for each linear splitting, the two induced representations 
up to homotopy form a matched pair. 

In the second part of this section, we work out the example of the tangent 
double of a Lie algebroid.

\subsection{Matched pairs of representations up to homotopy and main result}
\begin{definition}\label{matched}
  Let $(A\to M, \rho_A, [\cdot\,,\cdot])$ and and $(B\to M, \rho_B,
  [\cdot\,,\cdot])$ be two Lie algebroids and assume that $A$ acts on
  $\partial_B\colon C\to B$ up to homotopy via
  $(\nabla^{AB},\nabla^{AC}, R_{A})$ and $B$ acts on
  $\partial_A\colon C\to A$ up to homotopy via
  $(\nabla^{BA},\nabla^{BC}, R_{B})$.  Then we say that the two $2$-representations form 
a \emph{matched pair} if the following hold:\footnote{For the sake of
    simplicity, from now on we usually write $\nabla$ for all four
    connections. It is always clear from the indexes which
    connection is meant. We write $\nabla^A$ for the $A$-connection
    induced by $\nabla^{AB}$ and $\nabla^{AC}$ on $\wedge^2 B^*\otimes
    C$ and $\nabla^B$ for the $B$-connection induced on $\wedge^2
    A^*\otimes C$.}
\begin{enumerate}
\item[(M1)] $\rho_A\circ \partial_A=\rho_B\circ\partial_B$,
\item[(M2)]
  $\nabla_{\partial_Ac_1}c_2-\nabla_{\partial_Bc_2}c_1=-\nabla_{\partial_Ac_2}c_1+\nabla_{\partial_Bc_1}c_2$,
\item[(M3)] $[a,\partial_Ac]=\partial_A(\nabla_ac)-\nabla_{\partial_Bc}a$,
\item[(M4)] $[b,\partial_Bc]=\partial_B(\nabla_bc)-\nabla_{\partial_Ac}b$,
\item[(M5)] $[\rho_A(a),\rho_B(b)]=\rho_B(\nabla_ab)-\rho_A(\nabla_ba)$,
\item[(M6)]
$\nabla_b\nabla_ac-\nabla_a\nabla_bc-\nabla_{\nabla_ba}c+\nabla_{\nabla_ab}c=
R_{B}(b,\partial_Bc)a-R_{A}(a,\partial_Ac)b$,
\item[(M7)]
  $\partial_A(R_{A}(a_1,a_2)b)=-\nabla_b[a_1,a_2]+[\nabla_ba_1,a_2]+[a_1,\nabla_ba_2]+\nabla_{\nabla_{a_2}b}a_1-\nabla_{\nabla_{a_1}b}a_2$,
\item[(M8)]
  $\partial_B(R_{B}(b_1,b_2)a)=-\nabla_a[b_1,b_2]+[\nabla_ab_1,b_2]+[b_1,\nabla_ab_2]+\nabla_{\nabla_{b_2}a}b_1-\nabla_{\nabla_{b_1}a}b_2$,
\end{enumerate}
for all $a,a_1,a_2\in\Gamma(A)$, $b,b_1,b_2\in\Gamma(B)$ and
$c,c_1,c_2\in\Gamma(C)$, and
\begin{enumerate}
\item[(M9)] $\dr_{\nabla^A}R_{B}=\dr_{\nabla^B}R_{A}\in \Omega^2(A,
  \wedge^2B^*\otimes C)=\Omega^2(B,\wedge^2 A^*\otimes C)$, where
  $R_{B}$ is seen as an element of $\Omega^1(A, \wedge^2B^*\otimes
  C)$ and $R_{A}$ as an element of $\Omega^1(B, \wedge^2A^*\otimes
  C)$.
\end{enumerate}
\end{definition}
\begin{remark}
\begin{enumerate}
\item Compare equations (M1) to (M9) with equations (50) to (58) of 
\cite{Voronov12}.
\item Note that if $C$ is trivial, then $\partial_A,\partial_B$, $R_A, R_B$ and
    $\nabla^{AC}, \nabla^{BC}$ are trivial. In that case, equations
    (M1)--(M4), (M6) and (M9) and the left hand sides of (M7) and (M8)
    vanish. We find hence the definition of a matched pair of
    representations of Lie algebroids \cite{Mokri97,Lu97}.
\end{enumerate}
\end{remark}

\begin{remark}
\label{rmk:C}
Note that the vector bundle $C$ inherits a Lie algebroid structure
with anchor $\rho_C:=\rho_A\circ \partial_A=\rho_B\circ\partial_B$ and with
bracket given by $[c_1, c_2]=\nabla_{\partial_Ac_1}c_2-\nabla_{\partial_Bc_2}c_1$ 
for all $c_1,c_2\in\Gamma(C)$. 

The choice of sign is the natural one for the Leibniz identity to be satisfied.
The proof of the Jacobi identity is not completely
straightforward; it follows from (M3), (M4) and (M6).
A detailed proof of a more general result, but with the same type of
computation, is given in \cite[Theorem 6.12]{Jotz14b}.
Note that (M3) together with $\partial_A \circ \nabla^{BC} = \nabla^{BA} \circ \partial_A$ 
(Equation (2) in the definition of a $2$-representation) shows that 
$\partial_A \co C \to A$ is a Lie algebroid morphism. In
the same manner, (M4) together with $\partial_B \circ \nabla^{AC} = 
\nabla^{AB} \circ \partial_B$ shows that $\partial_B \co C \to B$ is a Lie algebroid morphism.
\end{remark}

Theorem \ref{main} is our main result. The proof is in \S\ref{sect:proof}. 

\begin{theorem}\label{main}
Let $(D;A,B;M)$ be a double vector bundle with VB-algebroid structures 
on both $(D\to A, B\to M)$ and $(D\to B, A\to M)$.  Choose a linear
splitting $\Sigma$ of $D$ and let $\mathcal D_A$ and $\mathcal D_B$
be the two $2$-representations defined by the lifts
$\sigma_A$ and $\sigma_B$. Then $(D;A,B;M)$ is a double Lie
algebroid if and only if the two $2$-representations form a matched
pair.
\end{theorem}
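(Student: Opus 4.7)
The strategy is to unpack the double Lie algebroid condition as the Lie bialgebroid identity for $(D\duer A, D\duer B)$ over $C^*$ and then reduce that identity, via the splitting $\Sigma$, to finitely many compatibilities between $\mathcal{D}_A$ and $\mathcal{D}_B$.

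First I would set up the dual side. By \S\ref{subsub:lsl} and the dualization discussion preceding \S\ref{subsect:VBa}, the splitting $\Sigma$ induces splittings $\Sigma^{\star}$ of $D\duer A$ and $D\duer B$ that satisfy $\nsp{\sigma_A^\star(a)}{\sigma_B^\star(b)}=0$ together with the explicit core pairings given there. By the dualization of $2$-representations recalled after Remark \ref{change}, the VB-algebroid structure on $(D\duer A\to C^*, A\to M)$ is encoded via $\Sigma^{\star}$ by the $2$-representation $\bigl((\nabla^{AC})^*,(\nabla^{AB})^*,-R_A^*\bigr)$ on $\partial_B^*\colon B^*\to C^*$, and symmetrically for $(D\duer B\to C^*, B\to M)$. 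Via Theorem \ref{rajan} this produces explicit formulas for the brackets $[\cdot,\cdot]_{D\duer A}$, $[\cdot,\cdot]_{D\duer B}$ and the anchors on the linear sections $\sigma_A^\star(a),\sigma_B^\star(b)$ and on the core sections $\beta^\dagger, \alpha^\dagger$.

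The second step is to reduce the Lie bialgebroid identity to a finite list of test pairs. Since both $d_{D\duer B}$ and the Schouten bracket on $\Gamma(\wedge^\bullet D\duer A)$ are graded derivations, the identity
\[
d_{D\duer B}[X,Y]_{D\duer A} = [d_{D\duer B}X,Y]_{D\duer A} + [X, d_{D\duer B}Y]_{D\duer A}
\]
for $X,Y\in\Gamma(D\duer A)$ need only be checked on generators of $\Gamma(D\duer A)$ as a $C^{\infty}(C^*)$-module, namely on pairs drawn from $\{\sigma_A^{\star}(a):a\in\Gamma(A)\}\cup\{\beta^\dagger:\beta\in\Gamma(B^*)\}$. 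Both sides of the identity live in $\Gamma(\wedge^2 D\duer A)$, so each case is in turn verified by pairing with a generating section of $\Gamma(\wedge^2 D\duer B)$ built from $\sigma_B^{\star}(b)$ and $\alpha^\dagger$; the accompanying anchor compatibility is tested on the generating set $q_{C^*}^*C^{\infty}(M)\cup\{\ell_c:c\in\Gamma(C)\}$ of $C^{\infty}(C^*)$.

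The third step is to carry out the computations and match them to (M1)--(M9). I expect the alignment to be roughly as follows: the anchor compatibility on $q_{C^*}^*C^{\infty}(M)$ produces (M1) and (M5); on the linear functions $\ell_c$ it produces (M2); pairing a linear generator with a core generator yields (M3), (M4) and (M6); pairing two linear generators yields (M7), (M8) and (M9); and the two-core case is consequence of what has already been extracted via Theorem \ref{rajan} applied separately to $\mathcal{D}_A$ and $\mathcal{D}_B$. Throughout, the bracket formulas \eqref{bracket_core_linear}--\eqref{bracket_core_linear_2} for core-linear sections shorten many computations, and Remark \ref{change} guarantees independence of the chosen splitting. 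For the converse, once (M1)--(M9) hold the Lie bialgebroid identity extends from these generators to all of $\Gamma(\wedge^\bullet D\duer A)$ by the graded Leibniz rule.

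The main obstacle will be sign-bookkeeping: the identification of $D\duer A$ with $D\duer B\duer C^*$ uses the sign $-\nsp{\cdot}{\cdot}$, the Gerstenhaber bracket introduces Koszul signs, and the dualization of $2$-representations flips the sign of the curvature. Organizing the cases in a table parametrized by the pair types (linear vs. core) on each side of the pairing $\nsp{\cdot}{\cdot}$, and using the explicit pairing formulas together with the dualized $2$-representations, should reduce each case to a routine but careful calculation.
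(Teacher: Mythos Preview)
Your overall strategy is the paper's: translate the double Lie algebroid condition into the Lie bialgebroid identity for $(D\duer A, D\duer B)$ over $C^*$, use the dual splittings $\sigma_A^\star,\sigma_B^\star$ and the dualized $2$-representations to make everything explicit, and then test on linear and core sections. The computations you anticipate are essentially those of \S\ref{proof}.

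There is, however, a real gap in your second step. The assertion that the identity
\[
d_{D\duer B}[X,Y]_{D\duer A}=[d_{D\duer B}X,Y]_{D\duer A}+[X,d_{D\duer B}Y]_{D\duer A}
\]
``need only be checked on generators of $\Gamma(D\duer A)$ as a $C^\infty(C^*)$-module'' is not justified by the derivation properties alone. The defect $\Phi(X,Y)$ on the two sides is \emph{not} $C^\infty(C^*)$-bilinear: replacing $Y$ by $FY$ produces extra terms involving $d_{D\duer B}F$ and both anchors, exactly as computed in \eqref{eq:fa2}. Equivalently, the graded-derivation argument works only for generators of the full Gerstenhaber algebra, which means you must include $C^\infty(C^*)$ in degree $0$; the section--function and function--function instances of the identity are then additional, independent conditions. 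These are precisely the paper's (B2) and (B3) in Proposition \ref{bialgebroid_conditions}, and they are not consequences of the section--section case (B1). Your phrase ``the accompanying anchor compatibility'' may be pointing at (B2) and (B3), but as written it is too vague, and your converse (``extends by the graded Leibniz rule'') fails without them. You must either run the Gerstenhaber-generator argument including degree $0$, or, as the paper does, compute $\Phi(X,FY)-F\,\Phi(X,Y)$ to isolate (B2) and then iterate to get (B3).

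A minor point: once you fix the above, the actual correspondence is (B3)$\Leftrightarrow$(M1)$+$(M2), (B2)$\Leftrightarrow$(M3)--(M6), and (B1) (together with (M5)) $\Leftrightarrow$(M7)--(M9); your tentative attribution of (M5) and (M6) is off, but the calculations themselves will correct this.
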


It is easy to see using Remark \ref{change}
that the induced Lie algebroid structure on the core $C$ of the double Lie algebroid does not depend on the choice of splitting.

\begin{remark}
\label{rmk:mp}
Given a matched pair of representations of Lie algebroids $A$ and $B$
on the same base $M$, the direct sum vector bundle $A\oplus B$ has a Lie algebroid 
structure, the \emph{bicrossproduct Lie algebroid}, denoted $A\bowtie B$  
\cite{Lu97,Mokri97}. The matched pair structure also induces on the decomposed
double vector bundle $A\times_M B$ a double Lie algebroid structure and, conversely,
any vacant double Lie algebroid (that is, a double Lie algebroid for which the core
is zero) arises from a matched pair of Lie algebroids in this 
way \cite[\S6]{Mackenzie11}.
\end{remark}

\subsection{The tangent double of a Lie algebroid}
Let $A\to M$ be a Lie algebroid with anchor $\rho$. We have seen in
Section \ref{tangent_double_al} that
 \begin{equation*}
\begin{xy}
\xymatrix{
TA \ar[d]_{Tq_A}\ar[r]^{p_A}& A\ar[d]^{q_A}\\
 TM\ar[r]_{p_M}& M}
\end{xy}
\end{equation*}
is endowed with two VB-algebroid structures. $(TA\to A,TM\to M)$ has
the standard tangent bundle Lie algebroid structure (Example \ref{td})
and $(TA\to TM, A\to M)$ is the tangent prolongation of $A\to M$
(Section \ref{tangent_double_al}).

Recall from Section \ref{tangent_double} that a linear connection
$\nabla\colon \mx(M)\times\Gamma(A)\to\Gamma(A)$ defines a linear
splitting $\Sigma^\nabla\colon A\times_M TM\to TA$.  This linear
splitting induces the two following 2-representations:
\begin{enumerate}
\item The VB-algebroid $(TA\to A,TM\to M)$ is described by the
  2-representation of $TM$ on $\Id_A\colon A\to A$ via
  $(\nabla,\nabla,R_\nabla)$ (Example \ref{double_ruth}). The anchor
  of $TM$ is $\Id_{TM}$ and the bracket is the Lie bracket of vector
  fields.
\item The VB-algebroid $(TA\to TM, A\to TM)$ is described by the
  2-representation of $A$ on $\rho\colon A\to TM$ via $(\nabla^{\rm
    bas},\nabla^{\rm bas},R_\nabla^{\rm bas})$ (Section
  \ref{tangent_double_al}).
\end{enumerate}

We check that these two 2-representations form a matched pair. This
will provide a new proof of the fact that the tangent double of a Lie
algebroid is a double Lie algebroid \cite{Mackenzie11}.  Condition (M1) in Definition
\ref{matched} is immediate, (M2) and (M3) are just two times the
definition of $\nabla^{\rm bas}\colon
\Gamma(A)\times\Gamma(A)\to\Gamma(A)$ and (M4) and (M5) are two times
the definition of $\nabla^{\rm bas}\colon
\Gamma(A)\times\mx(M)\to\mx(M)$. Condition (M7) is the definition of 
$R^{\rm bas}_\nabla$.  Hence, we only have to check (M6), (M8) and (M9).

In the following, $X,X_1,X_2$ will be arbitrary vector fields on $M$
and $a,a_1,a_2$ arbitrary sections of $A$.

\noindent\textbf{(M6)} The left-hand side of (M6) is  
\begin{equation*}
\begin{split}
\nabla_{X}\nabla^{\rm bas}_{a_1}a_2-\nabla^{\rm bas}_{a_1}\nabla_Xa_2 &
-\nabla^{\rm bas}_{\nabla_Xa_1}a_2+\nabla_{\nabla^{\rm bas}_{a_1}X}a_2\\
& = \nabla_{X}[a_1,a_2]+\nabla_X\nabla_{\rho(a_2)}a_1
-[a_1,\nabla_Xa_2]-\nabla_{\rho(\nabla_Xa_2)}a_1\\
& \relphantom{=}-[\nabla_Xa_1,a_2]-\nabla_{\rho(a_2)}\nabla_Xa_1
+\nabla_{[\rho(a_1),X]}a_2+\nabla_{\rho(\nabla_Xa_1)}a_2.
\end{split}
\end{equation*}
The second and sixth term add up to
$R(X,\rho(a_2))a_1+\nabla_{[X,\rho(a_2)]}a_1$ and the first, third,
and fifth term to $-R_\nabla^{\rm bas}(a_1,a_2)X+\nabla_{\nabla^{\rm
    bas}_{a_2}X}a_1-\nabla_{\nabla^{\rm bas}_{a_1}X}a_2$. The
definition of $\nabla^{\rm bas}\colon \Gamma(A)\times\mx(M)\to\mx(M)$
yields then immediately the right hand side of (M6), namely
\[R(X,\rho(a_2))a_1-R_\nabla^{\rm bas}(a_1,a_2)X.\] 

\medskip

\noindent\textbf{(M8)} This equation is easily verified:
\begin{equation*}
\begin{split}
-\nabla^{\rm bas}_a[X_1,X_2] + & [\nabla^{\rm bas}_aX_1,X_2]+[X_1,\nabla^{\rm bas}_aX_2] +\nabla^{\rm bas}_{\nabla_{X_2}a}X_1-\nabla^{\rm bas}_{\nabla_{X_1}a}X_2\\
& =-[\rho(a),[X_1,X_2]]-\rho(\nabla_{[X_1,X_2]}a)
+[[\rho(a),X_1] +\rho(\nabla_{X_1}a),X_2]\\
&\relphantom{=}+[X_1,[\rho(a),X_2]+\rho(\nabla_{X_2}a)] +[\rho(\nabla_{X_2}a),X_1]\\
&\relphantom{=} +\rho(\nabla_{X_1}\nabla_{X_2}a)
-[\rho(\nabla_{X_1}a),X_2]-\rho(\nabla_{X_2}\nabla_{X_1}a)\\
& =\rho(R_\nabla(X_1,X_2)a)
\end{split}
\end{equation*}
To get the second equality, we use the Jacobi identity for the Lie bracket 
of vector fields. (The four remaining terms cancel pairwise). 

\medskip

\noindent\textbf{(M9)} As one would expect, checking (M9) is a long, but
straightforward computation. We carry this out in detail here, because 
we will not give all the details of the proof of Theorem \ref{main}. 
We begin by computing
\begin{align*}
(\dr_{\nabla}R_\nabla^{\rm bas})(X_1,X_2)(a_1,a_2)=&-R_\nabla^{\rm bas}(a_1,a_2)[X_1,X_2]\\
&+\nabla_{X_1}(R_\nabla^{\rm
  bas}(a_1,a_2)X_2)-\nabla_{X_2}(R_\nabla^{\rm
  bas}(a_1,a_2)X_1)\\
&-R_\nabla^{\rm bas}(\nabla_{X_1}a_1,a_2)X_2 -R_\nabla^{\rm
  bas}(a_1,\nabla_{X_1}a_2)X_2\\
&+R_\nabla^{\rm bas}(\nabla_{X_2}a_1,a_2)X_1 +R_\nabla^{\rm bas}(a_1,\nabla_{X_2}a_2)X_1.
\end{align*}
This expands out to
\begin{align*}
&\nabla_{[X_1,X_2]}[a_1,a_2]-[\nabla_{[X_1,X_2]} a_1,a_2]
  -[a_1,\nabla_{[X_1,X_2]} a_2]+\nabla_{\nabla^{\rm
      bas}_{a_1}{[X_1,X_2]}}a_2-\nabla_{\nabla^{\rm
      bas}_{a_2}{[X_1,X_2]}}a_1\\
&+\nabla_{X_1}\left(-\nabla_{X_2}[a_1,a_2]+\cancel{[\nabla_{X_2} a_1,a_2]}
  +\cancel{[a_1,\nabla_{X_2} a_2]}-\nabla_{\nabla^{\rm
      bas}_{a_1}{X_2}}a_2+\nabla_{\nabla^{\rm
      bas}_{a_2}{X_2}}a_1\right)\\
&-\nabla_{X_2}\left(-\nabla_{X_1}[a_1,a_2]+\cancel{[\nabla_{X_1} a_1,a_2]}
  +\cancel{[a_1,\nabla_{X_1} a_2]}-\nabla_{\nabla^{\rm
      bas}_{a_1}{X_1}}a_2+\nabla_{\nabla^{\rm
      bas}_{a_2}{X_1}}a_1
\right)\\
&+\cancel{\nabla_{X_2}[\nabla_{X_1}a_1,a_2]}-[\nabla_{X_2} \nabla_{X_1}a_1,a_2]
  -\cancel{[\nabla_{X_1}a_1,\nabla_{X_2} a_2]}+\nabla_{\nabla^{\rm
      bas}_{\nabla_{X_1}a_1}{X_2}}a_2-\nabla_{\nabla^{\rm
      bas}_{a_2}{X_2}}\nabla_{X_1}a_1\\
&+\cancel{\nabla_{X_2}[a_1,\nabla_{X_1}a_2]}-\cancel{[\nabla_{X_2} a_1,\nabla_{X_1}a_2]}
  -[a_1,\nabla_{X_2} \nabla_{X_1}a_2]+\nabla_{\nabla^{\rm
      bas}_{a_1}{X_2}}\nabla_{X_1}a_2-\nabla_{\nabla^{\rm
      bas}_{\nabla_{X_1}a_2}{X_2}}a_1\\
&-\cancel{\nabla_{X_1}[\nabla_{X_2}a_1,a_2]}+[\nabla_{X_1} \nabla_{X_2}a_1,a_2]
  +\cancel{[\nabla_{X_2}a_1,\nabla_{X_1} a_2]}-\nabla_{\nabla^{\rm
      bas}_{\nabla_{X_2}a_1}{X_1}}a_2+\nabla_{\nabla^{\rm
      bas}_{a_2}{X_1}}\nabla_{X_2}a_1\\
&-\cancel{\nabla_{X_1}[a_1,\nabla_{X_2}a_2]}+\cancel{[\nabla_{X_1} a_1,\nabla_{X_2}a_2]}
  +[a_1,\nabla_{X_1} \nabla_{X_2}a_2]-\nabla_{\nabla^{\rm
      bas}_{a_1}{X_1}}\nabla_{X_2}a_2+\nabla_{\nabla^{\rm
      bas}_{\nabla_{X_2}a_2}{X_1}}a_1.
\end{align*}
Twelve terms of this equation cancel pairwise as shown, and a reordering 
of the remaining terms yields
\begin{align*}
  -R_\nabla&(X_1,X_2)[a_1,a_2]+[R_\nabla(X_1,X_2)a_1,a_2]+[a_1,R_\nabla(X_1,X_2)a_2]\\
  &+R_\nabla(X_2,\nabla^{\rm
    bas}_{a_1}{X_1})a_2+\nabla_{[X_2,\nabla^{\rm bas}_{a_1}X_1]}a_2
  -R_\nabla(X_2,\nabla^{\rm
    bas}_{a_2}{X_1})a_1-\nabla_{[X_2,\nabla^{\rm bas}_{a_2}X_1]}a_1\\
  &-R_\nabla(X_1,\nabla^{\rm
    bas}_{a_1}{X_2})a_2-\nabla_{[X_1,\nabla^{\rm bas}_{a_1}X_2]}a_2
  +R_\nabla(X_1,\nabla^{\rm
    bas}_{a_2}{X_2})a_1+\nabla_{[X_1,\nabla^{\rm bas}_{a_2}X_2]}a_1\\
  & +\nabla_{\nabla^{\rm bas}_{a_1}{[X_1,X_2]}}a_2-\nabla_{\nabla^{\rm
      bas}_{a_2}{[X_1,X_2]}}a_1-\nabla_{\nabla^{\rm
      bas}_{\nabla_{X_2}a_1}{X_1}}a_2\\
  &+\nabla_{\nabla^{\rm bas}_{\nabla_{X_2}a_2}{X_1}}a_1
  +\nabla_{\nabla^{\rm
      bas}_{\nabla_{X_1}a_1}{X_2}}a_2-\nabla_{\nabla^{\rm
      bas}_{\nabla_{X_1}a_2}{X_2}}a_1.
\end{align*}
By (M8), this equals
\begin{align*}
  -R_\nabla&(X_1,X_2)[a_1,a_2]+[R_\nabla(X_1,X_2)a_1,a_2]+[a_1,R_\nabla(X_1,X_2)a_2]\\
  &+R_\nabla(X_2,\nabla^{\rm bas}_{a_1}{X_1})a_2-R_\nabla(X_2,\nabla^{\rm
    bas}_{a_2}{X_1})a_1
  -R_\nabla(X_1,\nabla^{\rm bas}_{a_1}{X_2})a_2+R_\nabla(X_1,\nabla^{\rm
    bas}_{a_2}{X_2})a_1\\
  &+\nabla_{\rho(R_\nabla(X_1,X_2)a_2)}a_1-\nabla_{\rho(R_\nabla(X_1,X_2)a_1)}a_2,
\end{align*}
which is 
\begin{align*}
-&R_\nabla(X_1,X_2)[a_1,a_2]-\nabla^{\rm
  bas}_{a_2}R_\nabla(X_1,X_2)a_1+\nabla^{\rm bas}_{a_1}R_\nabla(X_1,X_2)a_2\\
&-R_\nabla(\nabla^{\rm bas}_{a_1}{X_1},X_2)a_2+R_\nabla(\nabla^{\rm bas}_{a_2}{X_1},X_2)a_1
-R_\nabla(X_1,\nabla^{\rm bas}_{a_1}{X_2})a_2+R_\nabla(X_1,\nabla^{\rm bas}_{a_2}{X_2})a_1\\
=&(\dr_{\nabla^{\rm bas}}R_\nabla)(a_1,a_2)(X_1,X_2).
\end{align*}

\section{Proof of the theorem}
\label{sect:proof}

We will prove the theorem by checking the Lie bialgebroid condition
only on particular families of sections; the linear sections and the core
sections. The main difficulty is to understand the additional conditions 
which have to be verified by the families of sections for the proof to be 
complete. This is done in Subsection \ref{families}. In Subsection \ref{proof}, 
we will show how the equations found in Subsection \ref{families} imply 
(M1)--(M9) and vice-versa.

\subsection{Families of sections of Lie bialgebroids}\label{families}
We recall the definition of a Lie bialgebroid \cite{MaXu94};
see also \cite[Chapter 12]{Mackenzie05}. We will then show how the equation defining
a Lie bialgebroid $(A,A^*)$ can be verified only on families of spanning sections of $A$ and $A^*$.
 
\begin{definition}
  Let $q_A\colon A\to M$ and $q_{A^*}\colon A^*\to M$ be a pair of
  dual vector bundles, and suppose each has a Lie algebroid structure,
  with anchors $\rho\colon A\to TM$ and $\rho_*\colon A^*\to TM$
  respectively, and brackets $[\cdot\,,\cdot]$ and
  $[\cdot\,,\cdot]_*$.

  Then $(A,A^*)$ is a \emph{Lie bialgebroid} if for all
  $a_1,a_2\in\Gamma(A)$:
\begin{equation}
\label{eq:bialgd}
\dr_{A^*}[a_1,a_2]=[\dr_{A^*}a_1,a_2]+[a_1,\dr_{A^*}a_2].
\end{equation}
\end{definition}

The brackets on the RHS are extensions to $2$-vectors by
standard Schouten calculus. 

It is often very convenient to check this condition only on the
elements of a given set of sections $\mathcal S\subseteq\Gamma(A)$ which
spans $\Gamma(A)$ as a $C^\infty(M)$-module.  We will formalize this
technique shortly. We first need to recall some consequences of the
definition. 

The proof of the following proposition is a straightforward
computation.
\begin{proposition}
  Let $A$ and $A^*$ be dual vector bundles with Lie algebroid
  structures.
For $a_1,a_2\in\Gamma(A)$, $\alpha_1,\alpha_2\in\Gamma(A^*)$ and $f\in
C^\infty(M)$, we have 
\begin{equation}
\label{eq:fa2}
\begin{split}
  &\, (\dr_{A^*}[a_1,fa_2]-[\dr_{A^*}a_1,fa_2]-[a_1,\dr_{A^*}(fa_2)])(\alpha_1,\alpha_2)\\
  =\,&\, f\cdot
  \left(\dr_{A^*}[a_1,a_2]-[\dr_{A^*}a_1,a_2]-[a_1,\dr_{A^*}a_2]
  \right))(\alpha_1,\alpha_2)\\\
  &\, -\langle a_2,\alpha_2\rangle\cdot \left(
    [\rho(a_1),\rho_*(\alpha_1)](f)-\rho_*(\ldr{a_1}\alpha_1)(f)+\rho(\ldr{\alpha_1}a_1)(f)-\rho_*(\dr_{A}f)\langle
    a_1,\alpha_1\rangle
  \right)\\
  &\, +\langle a_2,\alpha_1\rangle\cdot \left(
    [\rho(a_1),\rho_*(\alpha_2)](f)-\rho_*(\ldr{a_1}\alpha_2)(f)+\rho(\ldr{\alpha_2}a_1)(f)-\rho_*(\dr_{A}f)\langle
    a_1,\alpha_2\rangle \right).
\end{split}
\end{equation}
\end{proposition}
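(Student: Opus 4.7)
The plan is to expand directly and track how the failure of $C^\infty(M)$-bilinearity of the bialgebroid defect in its second slot manifests. Set $T(x,y) := \dr_{A^*}[x,y] - [\dr_{A^*}x,y] - [x, \dr_{A^*}y]$, the bialgebroid-defect bivector, and write
\[
E(a, \alpha)(f) := [\rho(a),\rho_*(\alpha)](f) - \rho_*(\ldr{a}\alpha)(f) + \rho(\ldr{\alpha}a)(f) - \rho_*(\dr_A f)\langle a,\alpha\rangle
\]
for the anchor-compatibility defect. The equation to prove then reads
\[
T(a_1, fa_2)(\alpha_1,\alpha_2) = f\,T(a_1,a_2)(\alpha_1,\alpha_2) - \langle a_2,\alpha_2\rangle E(a_1,\alpha_1)(f) + \langle a_2,\alpha_1\rangle E(a_1,\alpha_2)(f).
\]

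First I would expand each of the three terms comprising $T(a_1, fa_2)$. The Leibniz rule for the bracket gives $[a_1, fa_2] = f[a_1,a_2] + \rho(a_1)(f)\,a_2$; applying $\dr_{A^*}(gs) = \dr_{A^*}(g) \wedge s + g\dr_{A^*}(s)$ expands $\dr_{A^*}[a_1,fa_2]$ into four summands. Similarly, $\dr_{A^*}(fa_2) = \dr_{A^*}(f) \wedge a_2 + f\dr_{A^*}(a_2)$, and the graded Leibniz identity for the Schouten--Nijenhuis bracket, together with the function-rule $[P, fQ] = f[P,Q] + [P,f] \wedge Q$, expands $[a_1, \dr_{A^*}(fa_2)]$ and $[\dr_{A^*}a_1, fa_2]$. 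Upon collecting, the terms $\dr_{A^*}(f) \wedge [a_1,a_2]$ and $\rho(a_1)(f)\dr_{A^*}(a_2)$ each occur twice with opposite signs and cancel; the $f$-multiplied terms assemble into $f\,T(a_1,a_2)$; and the remainder takes the form $\Xi \wedge a_2$, with
\[
\Xi := \dr_{A^*}(\rho(a_1)(f)) - [a_1, \dr_{A^*}f] - [\dr_{A^*}a_1, f] \in \Gamma(A).
\]
That is, $\Xi$ is itself a bialgebroid defect, this time for the pair $(a_1, f)$.

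Since $(\Xi \wedge a_2)(\alpha_1,\alpha_2) = \langle \Xi, \alpha_1\rangle \langle a_2,\alpha_2\rangle - \langle \Xi,\alpha_2\rangle \langle a_2,\alpha_1\rangle$, it then suffices to show that $\langle \Xi, \alpha\rangle = -E(a_1,\alpha)(f)$ for every $\alpha \in \Gamma(A^*)$. I would compute this term by term, using: $\langle \dr_{A^*}g, \alpha\rangle = \rho_*(\alpha)(g)$ for any function $g$; $\langle [a_1, b], \alpha\rangle = \rho(a_1)\langle b,\alpha\rangle - \langle b, \ldr{a_1}\alpha\rangle$ applied to $b = \dr_{A^*}f$; and $\langle [\dr_{A^*}a_1, f], \alpha\rangle = -(\dr_{A^*}a_1)(\dr_A f,\alpha)$, which expands by the Koszul formula for $\dr_{A^*}a_1$ and simplifies via the defining identity $\langle \ldr{\alpha}a_1, \beta\rangle = \rho_*(\alpha)\langle a_1,\beta\rangle - \langle a_1, [\alpha,\beta]_*\rangle$ (with $\beta = \dr_A f$), which converts the residual $[\dr_A f, \alpha]_*$ contribution into the combination $\rho(\ldr{\alpha}a_1)(f) - \rho_*(\alpha)\rho(a_1)(f)$. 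Adding the three pieces produces exactly $-E(a_1,\alpha)(f)$.

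The main obstacle is the sign discipline in the Schouten--Nijenhuis calculus: the Leibniz rule $[P, fQ] = f[P,Q] + [P,f] \wedge Q$ for a $2$-vector $P$, the graded antisymmetry, and the sign in the contraction $[P,f] = \iota_{\dr_A f}P$ must all be consistent so that the four summands of $E(a_1,\alpha)(f)$ emerge in the correct combination. Once the signs are pinned down, the remaining verification is indeed the ``straightforward computation'' the authors allude to.
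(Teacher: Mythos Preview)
Your approach is correct and is precisely the ``straightforward computation'' the paper invokes without details: the paper gives no proof beyond that phrase, so there is nothing further to compare. Your organization---isolating the remainder as $\Xi\wedge a_2$ with $\Xi=\dr_{A^*}(\rho(a_1)f)-[a_1,\dr_{A^*}f]-[\dr_{A^*}a_1,f]$ and then verifying $\langle\Xi,\alpha\rangle=-E(a_1,\alpha)(f)$---is clean, and the cancellations and the final identification check out under the standard convention $[P,f]=-\iota_{\dr_A f}P$ for a bivector $P$.
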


Now assume that $(A,A^*)$ is a Lie bialgebroid. Take any $a_1\in\Ga A$ and 
any nonvanishing $\alpha_1\in\Gamma(A^*)$. Choose a nonvanishing $a_2\in\Gamma(A)$ 
and an $\alpha_2\in\Gamma(A^*)$ such that 
$\langle a_2,\alpha_1\rangle=0$ and $\langle a_2,\alpha_2\rangle=1$.
(If $A$ has rank $1$ then (\ref{b2}) below is vacuously true.)
Equation (\ref{eq:fa2}) now reduces to
\begin{equation}\label{b2}
[\rho(a_1),\rho_*(\alpha_1)](f)-\rho_*(\ldr{a_1}\alpha_1)(f)+\rho(\ldr{\alpha_1}a_1)(f)-\rho_*(\dr_{A}f)\langle
a_1,\alpha_1\rangle=0
\end{equation}
for all $a_1\in\Gamma(A)$, $f\in C^\infty(M)$, 
and nonvanishing $\alpha_1\in\Gamma(A^*)$. A straightworward
computation shows that the left-hand side of \eqref{b2} is tensorial
in the term $\alpha_1$. Hence, \eqref{b2} holds for all
$\alpha_1\in\Gamma(A^*)$. (For another proof, see \cite[12.1.8]{Mackenzie05}.)

On the other hand, the left-hand side of \eqref{b2} is not tensorial in the term 
$a_1$. We multiply $a_1$ by a function $g\in C^\infty(M)$ in
this equation, expand out, and subtract 
\[g\cdot
\left([\rho(a_1),\rho_*(\alpha_1)](f)-\rho_*(\ldr{a_1}\alpha_1)(f)+\rho(\ldr{\alpha_1}a_1(f)-\rho_*(\dr_{A}f)\langle
  a_1,\alpha_1\rangle \right)=0.
\]
We get that 
\[\langle a_1, \alpha_1\rangle\cdot \left(  
  -\rho_*(\dr_Ag)(f)-\rho_*(\dr_Af)(g) \right)=0.
\]
Again, since $a_1$ and $\alpha_1$ were arbitrary, we have found
\begin{equation*}
-\rho_*(\dr_Ag)(f)=\rho_*(\dr_Af)(g)\quad \text{for all } \quad f,g\in C^\infty(M),
\end{equation*}
which is easily seen to be equivalent to
\begin{equation}
\label{eq:Poissons}
-\rho\circ \rho_*^*=\rho_*\circ \rho^*,
\end{equation}
see also \cite{MaXu94},\cite[\S12.1]{Mackenzie05}. The map $\rho_*\circ \rho^*\co
T^*M\to TM$ defines a Poisson structure on $M$, which we take to be the \emph{Poisson
structure on $M$ induced by the Lie bialgebroid structure.} 

These considerations lead to the following result.

\begin{proposition}\label{bialgebroid_conditions}
  Let $q_A\colon A\to M$ and $q_{A^*}\colon A^*\to M$ be a pair of
  dual vector bundles, and suppose each has a Lie algebroid structure,
  with anchors $\rho\colon A\to TM$ and $\rho_*\colon A^*\to TM$
  respectively, and brackets $[\cdot\,,\cdot]$ and
  $[\cdot\,,\cdot]_*$.
 Let $\mathcal S$ be a subset of $\Gamma(A)$ which spans $\Gamma(A)$
  as a $C^\infty(M)$-module.

  Then $(A,A^*)$ is a Lie bialgebroid if and only if the following
  three conditions hold.
  \renewcommand{\theenumi}{{{\rm\roman{enumi}}}}
\begin{enumerate}
\item[(B1)] $\dr_{A^*}[a_1,a_2]=[\dr_{A^*}a_1,a_2]+[a_1,\dr_{A^*}a_2]$ for all $a_1,a_2\in
  \mathcal S$,
\item[(B2)]
  $[\rho(a),\rho_*(\alpha)](f)-\rho_*(\ldr{a}\alpha)(f)+\rho(\ldr{\alpha}a)(f)-\rho_*(\dr_{A}f)\langle
  a,\alpha\rangle=0$ for all $a\in \mathcal S$, $\alpha\in\Gamma(A^*)$ and
$f\in C^\infty(M)$, 
  and
\item[(B3)] $-\rho\circ \rho_*^*=\rho_*\circ \rho^*$.
\end{enumerate}
\end{proposition}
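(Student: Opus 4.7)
The forward direction is already essentially contained in the discussion preceding the statement, so I would simply organize that material. The substance of the proposition is the reverse direction, and my plan is to use equation~\eqref{eq:fa2} as the main bookkeeping device: it measures exactly how far (B1) is from being $C^\infty(M)$-bilinear in $(a_1,a_2)$, and the obstruction is expressed precisely in terms of the quantity appearing in (B2). Once (B2) is known to hold for \emph{all} $a\in\Gamma(A)$, the defect of (B1) becomes tensorial in $a_2$, and by skew-symmetry also in $a_1$, so knowing (B1) on a spanning set $\mathcal S$ suffices.

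For the forward direction, assume $(A,A^*)$ is a Lie bialgebroid. Then (B1) holds on all of $\Gamma(A)$, in particular on $\mathcal S$. Given $a_1\in\Gamma(A)$, a nowhere-zero $\alpha_1\in\Gamma(A^*)$ and $f\in C^\infty(M)$, I would (as in the preamble) locally choose $a_2\in\Gamma(A)$ and $\alpha_2\in\Gamma(A^*)$ with $\langle a_2,\alpha_1\rangle=0$ and $\langle a_2,\alpha_2\rangle=1$ (the rank-$1$ case of (B2) being vacuous), substitute into~\eqref{eq:fa2}, and observe that the LHS and the first parenthesis on the RHS vanish by~\eqref{eq:bialgd}, leaving (B2) for $(a_1,\alpha_1,f)$. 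Since (B2) is $C^\infty(M)$-linear in $\alpha_1$, it extends to all $\alpha\in\Gamma(A^*)$. Replacing $a_1$ by $ga_1$ in (B2) and subtracting $g$ times (B2) then isolates the identity $\rho_*(\dr_A g)(f)+\rho_*(\dr_A f)(g)=0$, which is exactly (B3).

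For the reverse direction, assume (B1)--(B3). First I would promote (B2) from $\mathcal S$ to all of $\Gamma(A)$: expand the LHS of (B2) at $fa$ using the Leibniz rules
\[
[\rho(fa),\rho_*(\alpha)]=f[\rho(a),\rho_*(\alpha)]-\rho_*(\alpha)(f)\rho(a),\qquad \ldr{fa}\alpha=f\ldr{a}\alpha+(\dr_A f)\langle a,\alpha\rangle,
\]
together with the analogous expansion of $\ldr{\alpha}(fa)$; after collecting terms, the difference from $f$ times (B2) at $a$ reduces, after using (B3), to zero. Hence (B2) is in fact valid for all $a\in\Gamma(A)$. Now denote the defect of (B1) by $\Delta(a_1,a_2)$. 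Equation~\eqref{eq:fa2} says that
\[
\Delta(a_1,fa_2)-f\Delta(a_1,a_2)
\]
is a combination of the LHSs of (B2) evaluated at $(a_1,\alpha_i,f)$, contracted against $\langle a_2,\alpha_j\rangle$. By (B2) this vanishes, so $\Delta$ is $C^\infty(M)$-linear in $a_2$. The Schouten calculus gives $[\dr_{A^*}a_2,a_1]=-[a_1,\dr_{A^*}a_2]$, whence $\Delta(a_2,a_1)=-\Delta(a_1,a_2)$ and $\Delta$ is $C^\infty(M)$-linear in $a_1$ as well. Thus vanishing of $\Delta$ on $\mathcal S\times\mathcal S$ — which is precisely (B1) — forces $\Delta\equiv 0$ on $\Gamma(A)\times\Gamma(A)$, which is~\eqref{eq:bialgd}.

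The one step that is not essentially already on the page is the extension of (B2) from $\mathcal S$ to all of $\Gamma(A)$ via (B3). I expect this to be the main obstacle: it is a routine but delicate Leibniz computation in which one must carefully identify the residual anomaly with the map $\rho\circ\rho_*^*+\rho_*\circ\rho^*$ applied to $\dr_A f'$ and paired against $\langle a,\alpha\rangle$ (so that (B3) exactly kills it). Everything else is formal manipulation of~\eqref{eq:fa2} and skew-symmetry.
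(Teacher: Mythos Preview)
Your proposal is correct and follows essentially the same route as the paper: both use equation~\eqref{eq:fa2} to control the failure of (B1) to be $C^\infty(M)$-bilinear, with (B2) killing the obstruction and (B3) governing the failure of (B2) to be tensorial in $a$. The only difference is organizational: you first use (B3) to extend (B2) from $\mathcal S$ to all of $\Gamma(A)$ and then invoke~\eqref{eq:fa2} plus skew-symmetry to get tensoriality of the defect $\Delta$, whereas the paper writes out $\Delta(ga_1,fa_2)$ in one stroke (implicitly absorbing the (B3) step into ``the considerations before the proposition'') and reads off that it vanishes by (B1) and (B2).
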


\begin{proof}
  We proved above that these three conditions hold when $(A,A^*)$ is a
  Lie bialgebroid. For the converse, a quick computation using
  (B1) and the considerations before the proposition shows that
\begin{align*}
  &\, (\dr_{A^*}[ga_1,fa_2]-[\dr_{A^*}(ga_1),fa_2]-[ga_1,\dr_{A^*}(fa_2)])(\alpha_1,\alpha_2)\\
  =\,&\, fg\cdot
  \left(\dr_{A^*}[a_1,a_2]-[\dr_{A^*}a_1,a_2]-[a_1,\dr_{A^*}a_2]
  \right))(\alpha_1,\alpha_2)\\
  &\, -fg\langle a_2,\alpha_2\rangle\cdot \left(
    [\rho(a_1),\rho_*(\alpha_1)](f)-\rho_*(\ldr{a_1}\alpha_1)(f)+\rho(\ldr{\alpha_1}a_1)(f)-\rho_*(\dr_{A}f)\langle
    a_1,\alpha_1\rangle
  \right)\\
  &\, +fg\langle a_2,\alpha_1\rangle\cdot \left(
    [\rho(a_1),\rho_*(\alpha_2)](f)-\rho_*(\ldr{a_1}\alpha_2)(f)+\rho(\ldr{\alpha_2}a_1)(f)-\rho_*(\dr_{A}f)\langle
    a_1,\alpha_2\rangle
  \right)\\
  &\, +fg\langle a_1,\alpha_1\rangle\cdot \left(
    [\rho(a_2),\rho_*(\alpha_2)](f)-\rho_*(\ldr{a_2}\alpha_2)(f)+\rho(\ldr{\alpha_2}a_2)(f)-\rho_*(\dr_{A}f)\langle
    a_2,\alpha_2\rangle
  \right)\\
  &\, -fg\langle a_1,\alpha_2\rangle\cdot \left(
    [\rho(a_2),\rho_*(\alpha_1)](f)-\rho_*(\ldr{a_2}\alpha_1)(f)+\rho(\ldr{\alpha_1}a_2)(f)-\rho_*(\dr_{A}f)\langle
    a_2,\alpha_1\rangle \right).
\end{align*}
for all $a_1,a_2\in\mathcal S$,
$\alpha_1,\alpha_2\in\Gamma(A^*)$ and $f,g\in C^\infty(M)$.  This vanishes by (B1) and (B2). Since the Lie bialgebroid condition
is additive and $\Gamma(A)$ is spanned as a $C^\infty(M)$-module by $\mathcal S$, we are done.
\end{proof}

\begin{remark}
  In Proposition \ref{bialgebroid_conditions}, the first two
  conditions are $C^\infty(M)$-linear in the $\Gamma(A^*)$-argument,
  so it is sufficient to check them on a subset $\mathcal R\subseteq
  \Gamma(A^*)$ that spans $\Gamma(A^*)$ as a $C^\infty(M)$-module.
\end{remark}

\subsection{The Lie bialgebroid conditions on lifts and on core sections}\label{proof}
We write here $\Theta_A\colon D\duer A\to TC^*$ for the anchor of $D\duer A\to C^*$ and 
$\Theta_B\colon D\duer B\to TC^*$ for the anchor of $D\duer B\to C^*$. 
We set \[\mathcal S:=\Gamma_{C^*}^c(D\duer A)\cup \sigma_A^\star(\Gamma(A))\]
 and 
\[\mathcal R:=\Gamma_{C^*}^c(D\duer B)\cup \sigma_B^\star(\Gamma(B)).
\]

\begin{proposition}
Condition {\rm (B3)} on $\mathcal S$ and $\mathcal R$ is equivalent to 
{\rm (M1)} and {\rm (M2)}.
\end{proposition}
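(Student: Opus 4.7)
The plan is to reformulate (B3) as a skew-symmetry condition on a bilinear form on $C^\infty(C^*)$ and then verify it only on a generating set, computing with the spanning families $\mathcal{S}$ and $\mathcal{R}$. Since (B3) is tensorial, it is equivalent to the identity
\[
\Theta_A(\Theta_B^*(dF))(G)+\Theta_B(\Theta_A^*(dF))(G)=0
\qquad \text{for all } F,G\in C^\infty(C^*),
\]
where the adjoints $\Theta_A^*$ and $\Theta_B^*$ are taken with the sign convention of \eqref{iso_sign}. Both sides are derivations in each of $F$ and $G$, so it suffices to verify the identity on generators of $C^\infty(C^*)$, namely pullbacks $q_{C^*}^*f$ for $f\in C^\infty(M)$ and fibrewise-linear functions $\ell_c$ for $c\in\Gamma(C)$.

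The computational input is the action of $\Theta_A$ on $\mathcal{S}$ and of $\Theta_B$ on $\mathcal{R}$. Using that $(D\duer A\to C^*;\,A\to M)$ is a VB-algebroid with core-anchor $\partial_B^*\colon B^*\to C^*$, that $\sigma_A^\star$ is defined via \eqref{iso_sign}, and the correspondence \eqref{ableitungen} between linear vector fields on $C^*$ and derivations of $\Gamma(C^*)$, one obtains that $\Theta_A(\sigma_A^\star(a))$ is the linear vector field on $C^*$ covering $\rho_A(a)\in\mx(M)$ and corresponding to minus the dual of $\nabla^{AC}_a$, while $\Theta_A(\beta^\dagger)$ is the vertical lift of $\partial_B^*\beta\in\Gamma(C^*)$. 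Symmetric formulas hold for $\Theta_B$ on $\mathcal{R}$ with $(\nabla^{BC},\partial_A,\rho_B)$ in place of $(\nabla^{AC},\partial_B,\rho_A)$. Combined with the pairing identities
\[
\nsp{\sigma_A^\star(a)}{\alpha^\dagger}=-q_{C^*}^*\langle\alpha,a\rangle,\quad \nsp{\beta^\dagger}{\sigma_B^\star(b)}=q_{C^*}^*\langle\beta,b\rangle,\quad \nsp{\sigma_A^\star(a)}{\sigma_B^\star(b)}=0,
\]
this pins down $\Theta_A^*(dF)\in D\duer B$ and $\Theta_B^*(dF)\in D\duer A$ through their decompositions along the lifts and core sections in $\mathcal{R}$ and $\mathcal{S}$ respectively.

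Substituting these computations into the displayed identity yields three cases. On two pullbacks $F=q_{C^*}^*f$, $G=q_{C^*}^*g$ the identity is automatic, as both terms vanish. For $F=\ell_c$ and $G=q_{C^*}^*f$ the identity reduces to
\[
q_{C^*}^*\bigl(\rho_A(\partial_A c)(f)-\rho_B(\partial_B c)(f)\bigr)=0,
\]
which holds for all $c\in\Gamma(C)$ and $f\in C^\infty(M)$ if and only if $\rho_A\circ\partial_A=\rho_B\circ\partial_B$, that is, (M1). Having (M1) in hand, for $F=\ell_{c_1}$ and $G=\ell_{c_2}$ the identity collapses to
\[
\nabla^{AC}_{\partial_A c_1}c_2-\nabla^{BC}_{\partial_B c_2}c_1+\nabla^{AC}_{\partial_A c_2}c_1-\nabla^{BC}_{\partial_B c_1}c_2=0,
\]
which, after rearrangement, is exactly (M2). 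Hence (B3) is equivalent to (M1) together with (M2).

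The main obstacle is sign bookkeeping: the identifications of $D\duer A$ with $(D\duer B)^*$ via $-\nsp{\cdot}{\cdot}$ and of $D\duer B$ with $(D\duer A)^*$ via $+\nsp{\cdot}{\cdot}$ are asymmetric (cf.\ \eqref{iso_sign}), and every evaluation of $\Theta_A^*$ on a linear function $\ell_c$ introduces an additional sign because the derivation induced on $\Gamma(C^*)$ is minus the transpose of that on $\Gamma(C)$. Fixing these conventions carefully at the outset is essential for (M1) and (M2) to emerge cleanly rather than up to a spurious overall sign.
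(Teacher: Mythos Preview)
Your proposal is correct and follows essentially the same route as the paper: both reduce (B3) to checking $\Theta_A\circ\Theta_B^*+\Theta_B\circ\Theta_A^*=0$ on the generators $q_{C^*}^*f$ and $\ell_c$ of $C^\infty(C^*)$, using the decomposition of $\Theta_B^*(dF)$ along $\mathcal S$ (and symmetrically for $\Theta_A^*(dF)$ along $\mathcal R$). The only difference is presentational: the paper records the intermediate identities $\Theta_B^*(dq_{C^*}^*f)=(\rho_B^*\dr f)^\dagger$ and $\Theta_B^*(d\ell_c)=-\sigma_A^\star(\partial_Ac)+\widetilde{\langle\nabla_\cdot c,\cdot\rangle}$ explicitly (and reuses them in the subsequent propositions), and it extracts (M1) already from the pullback case as an equality of vertical vector fields rather than from the mixed case as you do; your symmetry argument for the biderivation $P(F,G)$ makes this harmless. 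One small caution: in the paper's conventions $\Theta_A(\sigma_A^\star(a))=\widehat{(\nabla_a^{AC})^*}$ with no extra minus sign, so your remark that the induced derivation is ``minus the transpose'' should be rechecked when you fix signs.
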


\begin{proof}
Since
\[\Theta_A\circ \Theta_B^*, \Theta_B\circ\Theta_A^*\colon  T^*C^*\to TC^*.
\]
are vector bundle maps, it is sufficient to check (B3)
on $\dr F$ for $F\in C^\infty(C^*)$. In fact, it is even sufficient to
check (B3) on $\dr(q_{C^*}^*f)$ for $f\in C^\infty(M)$ and 
$\dr\ell_c$ for $c\in\Gamma(C)$.

\medskip

 Choose first $f\in C^\infty(M)$ and consider $q_{C^*}^*f\in
 C^\infty(C^*)$.
We have for any section $b\in \Gamma(B)$:
\[\nsp{\Theta_B^*(\dr q_{C^*}^* f)}{\sigma_B^\star(b)}
=\widehat{\nabla_b^*}(q_{C^*}^* f)=q_{C^*}^*(\rho_B(b)f)
\]
and for any $\alpha\in\Gamma(A^*)$:
\[\nsp{\Theta_B^*(\dr q_{C^*}^* f)}{\alpha^\dagger}
=(\partial_A^*\alpha)^\uparrow(q_{C^*}^* f)=0.
\]
This shows
\begin{equation}\label{anchor_on_core}
\Theta_B^*(\dr q_{C^*}^* f)=(\rho_B^*\dr
f)^\dagger\in \Gamma_{C^*}^c(D\duer A).
\end{equation} We get consequently 
$\Theta_A\circ\Theta_B^*(\dr q_{C^*}^* f)=(\partial_B^*\rho_B^*\dr
f)^\uparrow\in \mx(C^*)$.
In the same manner, we find $\Theta_A\circ\Theta_B^*(\dr q_{C^*}^* f)=(-\partial_A^*\rho_A^*\dr
f)^\uparrow\in \mx(C^*)$. The
equality of $\Theta_A\circ \Theta_B^*$ and  $-\Theta_B\circ\Theta_A^*$
on pullbacks is hence equivalent to
$\rho_A\circ \partial_A=\rho_B\circ \partial_B$.

\medskip

We continue with linear functions. Choose $c\in\Gamma(C)$.
Then for any section $b\in \Gamma(B)$, we get 
\[\nsp{\Theta_B^*(\dr \ell_c)}{\sigma_B^\star(b)}
=\widehat{\nabla_b^*}(\ell_c)=\ell_{\nabla_bc}
\]
and for any $\alpha\in\Gamma(A^*)$:
\[\nsp{\Theta_B^*(\dr \ell_c)}{\alpha^\dagger}
=(\partial_A^*\alpha)^\uparrow(\ell_c)=q_{C^*}^*\langle \alpha, \partial_Ac\rangle.
\]
This shows 
\begin{equation}\label{anchor_on_lift}
\Theta_B^*(\dr
\ell_c)=-\sigma_A^\star(\partial_Ac)+\widetilde{\langle\nabla_\cdot
c,\cdot\rangle}\in\Gamma^l_{C^*}(D\duer A),
\end{equation}
where $\langle\nabla_\cdot c,\cdot\rangle$ is seen as an element
of $\Gamma(\operatorname{Hom}(C^*,B^*))$.
This leads to 
\[\Theta_A\circ\Theta_B^*(\dr \ell_c)(\ell_{c'})=-\ell_{\nabla_{\partial_A(c)}c'}+\ell_{\nabla_{\partial_B(c')}c}
\]
for all $c'\in\Gamma(C)$ and 
\[\Theta_A\circ\Theta_B^*(\dr \ell_c)(q_{C^*}^*f)=-q_{C^*}^*(\rho_A\circ\partial_A(c)f)
\]
for $f\in C^\infty(M)$.
We find similar equations for  $\Theta_B\circ\Theta_A^*(\dr
\ell_c)(\ell_{c'})$ and $\Theta_B\circ\Theta_A^*(\dr
\ell_c)(q^*_{C^*}f)$,
and can conclude that $\Theta_A\circ
\Theta_B^*=-\Theta_B\circ\Theta_A^*$
holds if and only if (M1) and (M2)
are satisfied.
\end{proof}

As a corollary of this proof, we find the following result. Recall that the 
map $\Theta_B \circ \Theta^*_A \co T^*C^*\to TC$ defines a Poisson structure 
on $C^*$ (see (\ref{eq:Poissons}) and the considerations following it).

\begin{corollary}
The Poisson structure on $C^*$ induced by the Lie bialgebroid structure is the 
linear Poisson structure dual to the Lie algebroid structure on $C$ as in 
Remark\, {\rm \ref{rmk:C}}. More explicitly, it is given by
\begin{equation}
\begin{split}
\{\ell_{c_1}, \ell_{c_2}\} 
& = (\Theta_B\circ\Theta^*_A)(q^*_{C^*}\dr\ell_{c_1})(\ell_{c_2})
= \ell_{\nabla_{\partial_A(c_1)}(c_2)  - \nabla_{\partial_B(c_2)}(c_1)}  
= \ell_{[c_1,c_2]},\\
\{\ell_{c_1}, q^*_{C^*}f\} 
& = (\Theta_B\circ\Theta^*_A)(q^*_{C^*}\dr\ell_{c_1})(q^*_{C^*}f)
= q^*_{C^*}(\rho_A(\partial_A(c))(f))\\
\{q^*_{C^*}f_1, q^*_{C^*}f_2\} 
& = (\Theta_B\circ\Theta^*_A)(q^*_{C^*}\dr f_1)(q^*_{C^*}f) = 0. 
\end{split}
\end{equation}
\end{corollary}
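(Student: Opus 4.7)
The plan is to read off all three identities directly from the anchor computations carried out in the proof of the preceding proposition, in particular from equations \eqref{anchor_on_core} and \eqref{anchor_on_lift}. Since the Poisson tensor on $C^*$ determined by the Lie bialgebroid structure has sharp map $\pi^\sharp=\Theta_B\circ\Theta_A^\ast=-\Theta_A\circ\Theta_B^\ast$ (the second equality by (M1)/(M2)), I may use whichever of the two expressions is more convenient, and the Poisson bracket is $\{F_1,F_2\}=\pi^\sharp(\dr F_1)(F_2)$. Note that each Poisson bracket is a biderivation, so it suffices to compute it on the generating functions $q_{C^*}^*f$ and $\ell_c$.

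First I would handle the pullback-pullback case. By \eqref{anchor_on_core} (with roles of $A$ and $B$ interchanged, by symmetry of the construction), $\Theta_A^\ast(\dr q_{C^*}^*f_1)=(\rho_A^*\dr f_1)^\dagger$ is a core section of $D\duer B\to C^*$. Applying $\Theta_B$ to a core section gives a vertical vector field on $C^*$ (namely $(\partial_B^\ast\rho_A^\ast\dr f_1)^\uparrow$), and vertical vector fields annihilate pullbacks, so $\{q_{C^*}^*f_1,q_{C^*}^*f_2\}=0$.

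Next, for the linear-pullback case, the symmetric analogue of \eqref{anchor_on_lift} expresses $\Theta_A^\ast(\dr\ell_{c_1})$ as $-\sigma_B^\star(\partial_B c_1)$ plus a core-linear section. Applying $\Theta_B$, the core-linear part produces a vertical vector field which kills $q_{C^*}^*f$, while $\Theta_B(\sigma_B^\star(\partial_B c_1))=\widehat{\nabla^{BC}_{\partial_B c_1}{}^{\!\!*}}$ acts on $q_{C^*}^*f$ by pullback of the base vector field, giving $q_{C^*}^*(\rho_B(\partial_B c_1)f)=q_{C^*}^*(\rho_C(c_1)f)$ by (M1); the sign combines with the $-$ in front of $\sigma_B^\star(\partial_B c_1)$ and the identification $\pi^\sharp=\Theta_B\circ\Theta_A^\ast$ to produce the stated formula. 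Finally, for the linear-linear case the computation has essentially been done already in the proof of the preceding proposition: it was shown there that
$$\Theta_A\circ\Theta_B^\ast(\dr\ell_{c_1})(\ell_{c_2})=-\ell_{\nabla_{\partial_A c_1}c_2}+\ell_{\nabla_{\partial_B c_2}c_1},$$
so using $\pi^\sharp=-\Theta_A\circ\Theta_B^\ast$ and the definition of the Lie algebroid bracket on $C$ from Remark~\ref{rmk:C}, we obtain $\{\ell_{c_1},\ell_{c_2}\}=\ell_{\nabla_{\partial_A c_1}c_2-\nabla_{\partial_B c_2}c_1}=\ell_{[c_1,c_2]}$.

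The main obstacle is purely bookkeeping: one must reconcile the sign conventions built into the pairing $\nsp{\cdot}{\cdot}$, the two anchor maps $\Theta_A,\Theta_B$, the lifts $\sigma_A^\star,\sigma_B^\star$ (recall equation \eqref{iso_sign}), and the choice of which composition $\Theta_B\circ\Theta_A^\ast$ or $-\Theta_A\circ\Theta_B^\ast$ represents the Poisson sharp. Once these signs are fixed consistently with the conventions already in force in \S\ref{proof}, no further calculation is required: the three Poisson brackets follow immediately, and the resulting bracket $\{\,\cdot\,,\,\cdot\,\}$ is, by construction, the linear Poisson structure dual to the Lie algebroid $C$ of Remark~\ref{rmk:C}.
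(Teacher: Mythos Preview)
Your proposal is correct and follows precisely the approach implicit in the paper: the corollary is stated without proof, as a direct consequence of the anchor computations \eqref{anchor_on_core} and \eqref{anchor_on_lift} in the preceding proposition, and you carry this out in detail. One minor simplification: for the linear--pullback bracket you detour through the symmetric analogue $\Theta_A^\ast(\dr\ell_{c_1})$ and then invoke (M1), whereas the preceding proof already records $\Theta_A\circ\Theta_B^\ast(\dr\ell_c)(q_{C^*}^*f)=-q_{C^*}^*(\rho_A\circ\partial_A(c)f)$ directly, so using $\pi^\sharp=-\Theta_A\circ\Theta_B^\ast$ gives the stated formula immediately without any appeal to symmetry or extra sign-tracking.
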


\begin{remark}Note that the apparent asymmetry between the structures
over $A$ and $B$ arises from unavoidable choices in the identifications between 
the various duals. The Poisson structure on $C^*$ is nonetheless determined by 
requiring $\partial_A$ and $\partial_B$ to be morphisms of Lie algebroids.
\end{remark}

For the study of (B1) and (B2), we will need the following
lemma. Recall that for a Lie algebroid $A$, the Lie derivative
$\ldr{}\colon \Gamma(A)\times\Gamma(A^*)\to\Gamma(A^*)$ is defined by 
\[ \langle\ldr{a}\alpha, a'\rangle=\rho_A(a)\langle \alpha,
a'\rangle-\langle \alpha, [a,a']\rangle
\]
for all $a,a'\in\Gamma(A)$ and $\alpha\in\Gamma(A^*)$.
\begin{lemma}\label{useful_lie_der}
The Lie derivative $\ldr{}\colon \Gamma_{C^*}(D\duer A)\times
\Gamma_{C^*}(D\duer B)\to \Gamma_{C^*}(D\duer B)$ is given by the following identities:
\begin{equation*}
\begin{split}
\ldr{\beta^\dagger}\alpha^\dagger&=0,\quad 
\ldr{\beta^\dagger}\sigma_B^\star(b)=-\langle b,
\nabla^*_\cdot\beta\rangle^\dagger,\quad 
\ldr{\sigma_A^\star(a)}\alpha^\dagger=\ldr{a}\alpha^\dagger,\\
&\qquad 
\ldr{\sigma_A^\star(a)}\sigma_B^\star(b)=\sigma_B^\star(\nabla_ab)+\widetilde{R(a,\cdot)b}
\end{split}
\end{equation*}
for all $a\in\Gamma(A)$, $b\in\Gamma(B)$, $\alpha\in\Gamma(A^*)$ and $\beta\in\Gamma(B^*)$.
The Lie derivative $\ldr{}\colon \Gamma_{C^*}(D\duer B)\times
\Gamma_{C^*}(D\duer A)\to \Gamma_{C^*}(D\duer A)$ is given by:
\begin{equation*}
\begin{split}
\ldr{\alpha^\dagger}\beta^\dagger&=0,\quad 
\ldr{\alpha^\dagger}\sigma_A^\star(a)=-\langle a,
\nabla^*_\cdot\alpha\rangle^\dagger,\quad 
\ldr{\sigma_A^\star(b)}\beta^\dagger=\ldr{b}\beta^\dagger\\
&\qquad
\ldr{\sigma_A^\star(b)}\sigma_A^\star(a)=\sigma_B^\star(\nabla_ba)+\widetilde{R(b,\cdot)a}.
\end{split}
\end{equation*}
\end{lemma}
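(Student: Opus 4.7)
The plan is to verify each of the eight identities by computing the pairing of both sides with a spanning family of sections and applying the standard duality characterization of the Lie derivative. Since $\nsp{\cdot\,}{\cdot}$ identifies $\Gamma_{C^*}(D\duer B)$ with the module of $D\duer A$-forms, one has for $\Phi,\Phi'\in\Gamma_{C^*}(D\duer A)$ and $\Psi\in\Gamma_{C^*}(D\duer B)$
\begin{equation*}
\nsp{\Phi'}{\ldr{\Phi}\Psi}=\Theta_A(\Phi)\nsp{\Phi'}{\Psi}-\nsp{[\Phi,\Phi']_{D\duer A}}{\Psi},
\end{equation*}
and, by a short computation in which the minus sign in the identification $D\duer A\cong(D\duer B)\duer C^*$ cancels against itself, the analogous formula
\begin{equation*}
\nsp{\ldr{\Psi}\Phi}{\Psi'}=\Theta_B(\Psi)\nsp{\Phi}{\Psi'}-\nsp{\Phi}{[\Psi,\Psi']_{D\duer B}}
\end{equation*}
for the reverse direction. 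Since $\{\sigma_A^\star(a)\mid a\in\Gamma(A)\}\cup\{\beta^\dagger\mid\beta\in\Gamma(B^*)\}$ spans $\Gamma_{C^*}(D\duer A)$ (and symmetrically for $\Gamma_{C^*}(D\duer B)$), each identity reduces to two or four pairing checks against test sections $\Phi'$, resp.\ $\Psi'$.

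All required inputs are available. The pairings between the four distinguished types of sections are tabulated at the end of \S\ref{subsub:lsl}, together with the immediate vanishing $\nsp{\beta^\dagger}{\alpha^\dagger}=0$ (visible in any decomposition). Applying Theorem \ref{rajan} to the $2$-representation $({\nabla^{AC}}^*,{\nabla^{AB}}^*,-R^*)$ of $A$ on $\partial_B^*\colon B^*\to C^*$ associated to the dual lift $\sigma_A^\star$, one reads off
\begin{equation*}
[\sigma_A^\star(a_1),\sigma_A^\star(a_2)]_{D\duer A}=\sigma_A^\star([a_1,a_2])+\widetilde{R^*(a_1,a_2)},\qquad [\sigma_A^\star(a),\beta^\dagger]_{D\duer A}=({\nabla^{AB}_a}^*\beta)^\dagger,
\end{equation*}
together with $[\beta_1^\dagger,\beta_2^\dagger]_{D\duer A}=0$, $\Theta_A(\sigma_A^\star(a))=\widehat{{\nabla^{AC}_a}^*}$ and $\Theta_A(\beta^\dagger)=(\partial_B^*\beta)^\uparrow$; analogous formulas on $D\duer B\to C^*$ come from the $2$-representation of $B$. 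The core-linear bracket identities \eqref{bracket_core_linear}--\eqref{bracket_core_linear_2}, transferred to $D\duer A$, handle the term $\widetilde{R^*(a,a')}$ when it arises.

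With these in hand, each identity becomes a short direct check. The easy ones, such as $\ldr{\beta^\dagger}\alpha^\dagger=0$, are immediate: every relevant bracket $[\beta^\dagger,\Phi']_{D\duer A}$ is either zero or a core section in $\Gamma(B^*)$ (which pairs trivially with the $A^*$-core $\alpha^\dagger$), while the vertical vector field $(\partial_B^*\beta)^\uparrow$ annihilates the pullback functions on $C^*$ that arise. The representative nontrivial case is $\ldr{\sigma_A^\star(a)}\sigma_B^\star(b)=\sigma_B^\star(\nabla_a b)+\widetilde{R(a,\cdot)b}$: pairing both sides against $\sigma_A^\star(a')$ produces the $R(a,a')b$ contribution through the curvature term $\widetilde{R^*(a,a')}$ of the bracket (using the pairing identity $\nsp{\widetilde{R^*(a,a')}}{\sigma_B^\star(b)}=\ell_{R(a,a')b}$), while pairing against $\beta^\dagger$ reproduces $\sigma_B^\star(\nabla_a b)$ by applying \eqref{ableitungen} to $\widehat{{\nabla^{AC}_a}^*}$ acting on $q_{C^*}^*\langle\beta,b\rangle$ and on the linear function $\ell_{\langle\beta,\nabla_\cdot b\rangle}$.

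The main obstacle is the bookkeeping, not any single idea: one must distinguish each $A$- or $B$-connection from its dual connection on the corresponding dual bundle, keep careful track of which core section lives in $B^*$ and which in $A^*$, and be attentive when pairing with the core-linear sections $\widetilde{R^*(\cdot,\cdot)}$. No step is conceptually difficult; the lemma is in essence Theorem \ref{rajan} for $D\duer A$ paired against Theorem \ref{rajan} for $D\duer B$.
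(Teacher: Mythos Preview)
Your approach is essentially the same as the paper's: both compute $\nsp{\Phi'}{\ldr{\Phi}\Psi}=\Theta_A(\Phi)\nsp{\Phi'}{\Psi}-\nsp{[\Phi,\Phi']_{D\duer A}}{\Psi}$ against the spanning test sections $\Phi'\in\{\sigma_A^\star(a'),\beta^\dagger\}$, using the dual $2$-representation $({\nabla^{AC}}^*,{\nabla^{AB}}^*,-R^*)$ on $D\duer A\to C^*$ and the pairing identities from \S\ref{subsub:lsl}. One small slip in your sketch: in the check of $\ldr{\sigma_A^\star(a)}\sigma_B^\star(b)$ against $\beta^\dagger$, no linear function $\ell_{\langle\beta,\nabla_\cdot b\rangle}$ appears---you only need $\widehat{{\nabla^{AC}_a}^*}$ on the pullback $q_{C^*}^*\langle\beta,b\rangle$ together with the bracket term $\nsp{({\nabla^{AB}_a}^*\beta)^\dagger}{\sigma_B^\star(b)}=q_{C^*}^*\langle{\nabla^{AB}_a}^*\beta,b\rangle$.
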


Note that in these equations, $R(a,\cdot)b$ is seen as a
  section of $\operatorname{Hom}(C^*,A^*)$ and $R(b,\cdot)a$ is seen
  as a section of $\operatorname{Hom}(C^*,B^*)$.

\begin{proof}
We have 
\begin{equation*}
\begin{split}
\nsp {\beta_2^\dagger}{\ldr{\beta_1^\dagger}\alpha^\dagger}
&=(\partial_B^*\beta_1)^\uparrow\nsp {\beta_2^\dagger}{\alpha^\dagger}
-\nsp {[\beta_1^\dagger, \beta_2^\dagger]}{ \alpha^\dagger}=0\quad\text{ and }\\
\nsp {\sigma_A^\star(a)}{\ldr{\beta_1^\dagger}\alpha^\dagger}&=(\partial_B^*\beta_1)^\uparrow(-q_{C^*}^*\langle\alpha,a\rangle)
+\nsp {\nabla_a^*\beta_1^\dagger}{\alpha^\dagger}=0
\end{split}
\end{equation*}
for arbitrary $\beta_1,\beta_2\in\Gamma(B^*)$, $\alpha\in\Gamma(A^*)$  and $a\in\Gamma(A)$. 
This proves $\ldr{\beta_1^\dagger}\alpha^\dagger=0$. 

\medskip

Then we compute \begin{equation*}
\begin{split}
\nsp {\beta_2^\dagger}{\ldr{\beta_1^\dagger}\sigma_B^\star(b)}
&=(\partial_B^*\beta_1)^\dagger(q_{C^*}^*\langle\beta_2, b\rangle)
-\nsp {[\beta_1^\dagger, \beta_2^\dagger]}{\sigma_B^\star(b)}=0
\end{split}
\end{equation*}
which shows that $\ldr{\beta_1^\dagger}\sigma_B^\star(b)$ is a 
section with values in the core, and 
\begin{equation*}
\begin{split}
\nsp {\sigma_A^\star(a)}{\ldr{\beta_1^\dagger}\sigma_B^\star(b)}&=0
+\nsp {\nabla_a^*\beta_1^\dagger}{\sigma_B^\star(b)}
=q_{C^*}^*\langle b, \nabla_a^*\beta_1\rangle.
\end{split}
\end{equation*}
This proves
$\ldr{\beta_1^\dagger}\sigma_B^\star(b)=-\langle b,
  \nabla_\cdot^*\beta_1\rangle^\dagger$, with $\langle b,
  \nabla_\cdot^*\beta_1\rangle\in\Gamma(A^*)$.
We also find 
\begin{align*}
\nsp {\beta^\dagger}{\ldr{\sigma_A^\star(a_1)}\alpha^\dagger}&= \widehat{\nabla_{a_1}^*}\nsp{\beta^\dagger}
{\alpha^\dagger}-\nsp{\nabla_{a_1}^*\beta^\dagger}{\alpha^\dagger}=0\quad\text{ and }\\
\nsp{\sigma_A^\star(a_2)}{\ldr{\sigma_A^\star(a_1)}\alpha^\dagger}&=-\widehat{\nabla_{a_1}^*}(q_{C^*}^*\langle \alpha,a_2\rangle)
-\nsp {\sigma_A^\star[a_1,a_2]+(R(a_1,a_2)^*)^\dagger}{ \alpha^\dagger}\\
&=-q_{C^*}^*(\rho_A(a_1)\langle\alpha, a_2\rangle-\langle\alpha,[a_1,a_2]\rangle)
=-q_{C^*}^*\langle \ldr{a_1}\alpha,a_2\rangle
\end{align*}
for arbitrary $a_1,a_2\in\Gamma(A)$ and $\alpha\in\Gamma(A^*)$. This proves
the equality
$\ldr{\sigma_A^\star(a_1)}\alpha^\dagger=\ldr{a_1}\alpha^\dagger$.
\medskip

The identity
\begin{equation*}
\begin{split}
\nsp {\beta^\dagger}{\ldr{\sigma_A^\star(a_1)}\sigma_B^\star(b)}
&=\widehat{\nabla_{a_1}^*}q_{C^*}^*\langle
\beta, b\rangle
-\nsp{\nabla_{a_1}^*\beta^\dagger}{\sigma_B^\star(b)} \\
&=q_{C^*}^*\left(\rho_A(a_1) \langle
\beta,b\rangle-\langle\nabla^*_{a_1}\beta, b\rangle
\right)=q_{C^*}^*\langle \beta, \nabla_{a_1}b\rangle
\end{split}
\end{equation*}
shows that $\ldr{\sigma_A^\star(a_1)}\sigma_B^\star(b)$ is the
sum of $\sigma_B^\star(\nabla_{a_1}b)$ with a section with values in
the core. To find out this core term, we finally compute
\begin{equation*}
\begin{split}
\nsp {\sigma_A^\star(a_2)}{\ldr{\sigma_A^\star(a_1)}\sigma_B^\star(b)}&=0-\nsp
{\sigma_A^*[a_1,a_2]+\widetilde{R(a_1,a_2)^*}}{\sigma_B^\star(b)}\\
&=-\ell_{R(a_1,a_2)(b)}.
\end{split}
\end{equation*}
This shows that
$\ldr{\sigma_A^\star(a_1)}\sigma_B^\star(b)=\sigma_B^\star(\nabla_{a_1}b)+\widetilde{R(a_1,\cdot)b}$.

\medskip

The formulas describing the Lie derivative $\ldr{}\colon \Gamma_{C^*}(D\duer B)\times
\Gamma_{C^*}(D\duer A)\to \Gamma_{C^*}(D\duer A)$ can be verified in the
same manner.
\end{proof}

\begin{proposition}
Condition {\rm (B2)} on $\mathcal S$ and $\mathcal R$ is equivalent to 
{\rm (M3)}, {\rm (M4)}, {\rm (M5)} and {\rm (M6)}.
\end{proposition}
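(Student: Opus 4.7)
The plan is to verify condition (B2) case by case on $\mathcal S\times\mathcal R$, splitting according to whether each factor is a lift or a core section, and separately testing the equation on the two generating classes of functions in $C^\infty(C^*)$: pullbacks $q_{C^*}^* f$, $f\in C^\infty(M)$, and linear functions $\ell_c$, $c\in\Gamma(C)$. All Lie derivatives needed are supplied by Lemma \ref{useful_lie_der}. The anchors I need come from the dual $2$-representations on $D\duer A$ and $D\duer B$: for lifts one has $\Theta_A(\sigma_A^\star(a))(\ell_c)=\ell_{\nabla^{AC}_a c}$ and $\Theta_A(\sigma_A^\star(a))(q_{C^*}^*f)=q_{C^*}^*(\rho_A(a)f)$, while for core sections $\Theta_A(\beta^\dagger)=(\partial_B^*\beta)^\uparrow$ is a vertical lift over $M$, with symmetric formulas for $\Theta_B$. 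The explicit forms of $\dr_{D\duer A}(q_{C^*}^*f)$ and $\dr_{D\duer A}\ell_c$ as sections of $D\duer B$ are read off from the analogues of \eqref{anchor_on_core} and \eqref{anchor_on_lift}.

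The case of two core sections $(\beta^\dagger,\alpha^\dagger)$ is immediate: by Lemma \ref{useful_lie_der} both Lie derivatives vanish, the pairing vanishes, and the two vertical lifts $(\partial_B^*\beta)^\uparrow$ and $(\partial_A^*\alpha)^\uparrow$ on $C^*$ commute, so (B2) holds automatically. In the mixed case $(\sigma_A^\star(a),\alpha^\dagger)$, the pairing is $-q_{C^*}^*\langle\alpha,a\rangle$, and substituting $\ldr{\sigma_A^\star(a)}\alpha^\dagger=(\ldr{a}\alpha)^\dagger$ and $\ldr{\alpha^\dagger}\sigma_A^\star(a)=-\langle a,\nabla^*_\cdot\alpha\rangle^\dagger$ into (B2) and then testing on $\ell_c$ should yield the pairing of (M3) with an arbitrary $\alpha$; testing on $q_{C^*}^*f$ reduces to a tautology once (M1) is in hand from the previous proposition. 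The symmetric mixed case $(\beta^\dagger,\sigma_B^\star(b))$ produces (M4) the same way.

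The main case is the double-lift $(\sigma_A^\star(a),\sigma_B^\star(b))$. Here the pairing vanishes and Lemma \ref{useful_lie_der} yields
\[
\ldr{\sigma_A^\star(a)}\sigma_B^\star(b)=\sigma_B^\star(\nabla_a b)+\widetilde{R_A(a,\cdot)b},\qquad
\ldr{\sigma_B^\star(b)}\sigma_A^\star(a)=\sigma_A^\star(\nabla_b a)+\widetilde{R_B(b,\cdot)a}.
\]
Evaluating on $F=q_{C^*}^*f$, the anchors of the core-linear sections vanish on pullbacks (they are vertical over $M$), so (B2) collapses to $q_{C^*}^*\bigl([\rho_A(a),\rho_B(b)]f-\rho_B(\nabla_a b)f+\rho_A(\nabla_b a)f\bigr)=0$, i.e.~(M5). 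Evaluating on $F=\ell_c$ is the main step: one must compute the anchor of a core-linear section $\widetilde\phi$ of $D\duer B\to C^*$ with $\phi\in\Gamma(\operatorname{Hom}(C^*,A^*))$ acting on a linear function, after interpreting $R_A(a,\cdot)b$ as an element of $\operatorname{Hom}(C^*,A^*)$ via $A^*\otimes C=C\otimes A^*$ and tracking signs through the $-\nsp{\cdot}{\cdot}$ dualization. The resulting identity, once assembled, is exactly (M6). The principal obstacle I foresee is this last computation together with the bookkeeping of tensor identifications and signs; once it is in hand, the four cases combine to give the claimed equivalence between (B2) restricted to $\mathcal S\times\mathcal R$ and the conjunction of (M3)--(M6).
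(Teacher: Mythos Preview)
Your plan is correct and follows essentially the same route as the paper's proof: the same four cases on $\mathcal S\times\mathcal R$, tested against pullbacks and linear functions, using Lemma~\ref{useful_lie_der} and the anchor formulas you list, with each case yielding exactly the condition you predict. One minor point: in the mixed case $(\sigma_A^\star(a),\alpha^\dagger)$ on $q_{C^*}^*f$, the paper shows the expression vanishes outright (every term is a vertical lift applied to a pullback, and the last term is $(\partial_B^*\rho_B^*\dr f)^\uparrow$ acting on a pullback), so you do not actually need to invoke (M1) there.
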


\begin{proof}
The idea of this proof is to check (B2) on linear and core sections in $\mathcal S$ and
$\mathcal R$, and on linear and $q_{C^*}$-pullback functions on $C^*$.
We start with core sections. Choose $\alpha\in\Gamma(A^*)$ and
  $\beta\in\Gamma(B^*)$. 
We have $[\Theta_B(\alpha^\dagger), \Theta_A(\beta^\dagger)]=[(\partial_A^*\alpha)^\uparrow,(\partial_B^*\beta)^\uparrow]=0$.
By Lemma \ref{useful_lie_der} and with
$\nsp{\beta^\dagger}{\alpha^\dagger}=0$, we find that (B2) is
trivially satisfied 
on $\alpha^\dagger, \beta^\dagger$ and any element of $C^\infty(C^*)$.

Now choose $a\in\Gamma(A)$, $\alpha\in\Gamma(A^*)$.
Using Lemma \ref{useful_lie_der} we find for all $F\in C^\infty(C^*)$
\begin{equation*}
\begin{split}
&[\Theta_B(\alpha^\dagger),\Theta_A(\sigma_A^\star(a))](F)-\Theta_A(\ldr{\alpha^\dagger}\sigma_A^\star(a))(F)+\Theta_B(\ldr{\sigma_A^\star(a)}\alpha^\dagger)(F)\\
&\qquad -\Theta_A(\dr_{D\duer
  B}F)\nsp{\sigma_A^\star(a)}{\alpha^\dagger}\\
=&-(\nabla_a^*(\partial_A^*\alpha))^\uparrow(F)+\langle a,\nabla_{\partial_B\cdot}^*\alpha\rangle^\uparrow(F)+(\partial_A^*\ldr{a}\alpha)^\uparrow(F)+(\Theta_A\circ\Theta_B^*\dr
  F)q_{C^*}^*\langle\alpha, a\rangle.
\end{split}
\end{equation*}
In particular, for $F=q_{C^*}^*f$, $f\in
  C^\infty(M)$, this is 
$0+(\partial_B^*\rho_B^*\dr
  f)^\uparrow q_{C^*}^*\langle\alpha, a\rangle=0$
by \eqref{anchor_on_core}
and for $F=\ell_c$, $c\in\Gamma(C)$, this is 
\begin{align*}
q_{C^*}^*(-\langle \nabla_a^*(\partial_A^*\alpha)),c\rangle+\langle a,
  \nabla_{\partial_Bc}^*\alpha\rangle+\langle\partial_A^*\ldr{a}\alpha,
  c\rangle-(\rho_B\circ\partial_B(c))\langle\alpha,a\rangle)
\end{align*}
by \eqref{anchor_on_lift}. But this equals
$q_{C^*}^*\left(\left\langle \alpha, \partial_A\left(\nabla_ac\right)
-\nabla_{\partial_Bc} a
- [a, \partial_Ac]\right\rangle\right)$.
This shows that (B2) is in this case equivalent to (M3). In the same
manner,
(B2) on $\beta^\dagger\in \mathcal S$ for $\beta\in\Gamma(B^*)$, 
$\sigma_B^\star(b)\in\mathcal R$ for $b\in\Gamma(B)$ and $F\in
C^\infty(C^*)$ is equivalent to (M4).  

Now choose $a\in\Gamma(A)$ and $b\in\Gamma(B)$. (B2) on
$\sigma_A^\star(a)$, $\sigma_B^\star(b)$ and $q_{C^*}^*f$, $f\in C^\infty(M)$,
is
\begin{equation*}
q_{C^*}^*\left(\left(
[\rho_B(b),\rho_A(a)]-\rho_A(\nabla_ba)+\rho_B(\nabla_ab)\right)(f) \right)=0
\end{equation*}
by Lemma \ref{useful_lie_der}.  This is (M5).  Finally we compute (B2)
on $\sigma_A^\star(a)$, $\sigma_B^\star(b)$ and $\ell_c$, for
$c\in\Gamma(C)$.  This is
\begin{align*}
\ell_{\nabla_b\nabla_ac-\nabla_a\nabla_bc}-\ell_{\nabla_{\nabla_ba}c+R_{BA}(b,\partial_Bc)a}
+\ell_{\nabla_{\nabla_ab}c+R_{AB}(a,\partial_Ac)b}=0
\end{align*}
Lemma \ref{useful_lie_der}.
We find hence that (B2) on $\sigma_A^\star(a)$, $\sigma_B^\star(b)$ and
$\ell_c$ is equivalent to (M6).
\end{proof}

We conclude the proof of Theorem \ref{main} with the study of (B1) on 
linear and core sections.
\begin{proposition}
Assume that {\rm (M5)} is given. Condition {\rm (B1)} on elements of 
$\mathcal S$ and $\mathcal R$ is equivalent to {\rm (M7)}, {\rm (M8)} and {\rm (M9)}.
\end{proposition}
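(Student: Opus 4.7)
The plan is to verify (B1) — that is, $\dr_{A^*}[a_1,a_2]=[\dr_{A^*}a_1,a_2]+[a_1,\dr_{A^*}a_2]$ with $A=D\duer A$ and $A^*=D\duer B$ paired by $\nsp{\cdot}{\cdot}$ — for all $a_1,a_2\in\mathcal S$, and to test the resulting identity in $\Gamma(\wedge^2 A)$ by pairing with arbitrary pairs from $\mathcal R$, as allowed by the remark after Proposition \ref{bialgebroid_conditions}.  Splitting $\mathcal S=\Gamma_{C^*}^c(D\duer A)\cup\sigma_A^\star(\Gamma(A))$ gives three cases: (a) both core $(\alpha_1^\dagger,\alpha_2^\dagger)$; (b) mixed $(\sigma_A^\star(a),\alpha^\dagger)$; (c) both linear $(\sigma_A^\star(a_1),\sigma_A^\star(a_2))$.

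In case (a) the bracket $[\alpha_1^\dagger,\alpha_2^\dagger]$ vanishes identically (core sections commute in any VB-algebroid), and Lemma \ref{useful_lie_der} shows $\dr_{A^*}\alpha_i^\dagger$ pairs to zero against every element of $\mathcal R$, so (B1) holds trivially.  In case (b) the bracket is a core section built from $\ldr{a}\alpha$; expanding (B1) via Lemma \ref{useful_lie_der} together with the defining compatibilities $\partial_A\circ\nabla^{BC}=\nabla^{BA}\circ\partial_A$ and $\partial_B\circ\nabla^{AC}=\nabla^{AB}\circ\partial_B$ of a 2-representation reduces the test to an algebraic identity that is a consequence of (M3) and (M4) already produced by the (B2)-analysis.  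Neither (a) nor (b) therefore produces a new constraint.

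Case (c) carries the content.  Applying Theorem \ref{rajan} to the dual 2-representation gives $[\sigma_A^\star(a_1),\sigma_A^\star(a_2)]=\sigma_A^\star[a_1,a_2]+\widetilde{R_A^*(a_1,a_2)}$, so both sides of (B1) now contain genuine curvature terms.  I will test on the three subfamilies of $\mathcal R$.  The core-core test $(\beta_1^\dagger,\beta_2^\dagger)$, after every Lie derivative is rewritten by Lemma \ref{useful_lie_der} and the pairing is computed, collapses to the statement that $\partial_A(R_A(a_1,a_2)b)$ equals the five-term Jacobi-type expression on the right of (M7) for each $b\in\Gamma(B)$; by the symmetric computation on the other VB-algebroid side, the mixed test $(\sigma_B^\star(b),\beta^\dagger)$ produces (M8).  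The double-linear test $(\sigma_B^\star(b_1),\sigma_B^\star(b_2))$ pairs to an element of $C^\infty(C^*)$; separating its $q_{C^*}^*$-pullback part from its fibre-linear part, the pullback part is absorbed by the assumed (M5), and the fibre-linear part is precisely the identity $\dr_{\nabla^A}R_B=\dr_{\nabla^B}R_A$ of (M9).

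The main obstacle will be this last double-linear extraction of (M9).  Structurally it parallels the explicit (M9) calculation done for the tangent double in Section \ref{sect:mte}: many terms arise from expanding both sides of (B1) against $\sigma_B^\star(b_1)\wedge\sigma_B^\star(b_2)$ using Lemma \ref{useful_lie_der}, a large block of them cancels pairwise thanks to the Jacobi identity for $[\cdot,\cdot]_A$, and the surviving terms reassemble into $\dr_{\nabla^A}R_B$ and $\dr_{\nabla^B}R_A$ precisely when (M7) and (M8) are invoked to eliminate the cross-curvature terms that couple the two connections to the bracket.  Conversely, granted (M7)--(M9) alongside (M5), reading each of the three subtests of case (c) backwards confirms (B1) on $\mathcal S\times\mathcal S$, completing the equivalence.
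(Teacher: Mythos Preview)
Your global strategy matches the paper's, but the case bookkeeping is wrong in a way that creates a real gap.  First a notational point: the core of $D\duer A$ is $B^*$, so core sections in $\mathcal S$ are $\beta^\dagger$ with $\beta\in\Gamma(B^*)$; dually, core sections in $\mathcal R$ are $\alpha^\dagger$ with $\alpha\in\Gamma(A^*)$.  Keeping this straight matters for tracking which of (M7), (M8) appears where.

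The substantive error is your claim that the mixed case (b) ``reduces to an algebraic identity that is a consequence of (M3) and (M4)''.  It does not.  In the paper's (equivalent, dual) organization---bracketing in $\mathcal R$ and testing against pairs from $\mathcal S$---the mixed case $[\sigma_B^\star(b),\alpha^\dagger]$ evaluated on $(\sigma_A^\star(a_1),\sigma_A^\star(a_2))$ is precisely where (M7) emerges, and this is also the place where (M5) must be invoked (twice) to cancel the anchor–commutator terms $[\rho_A(a_i),\rho_B(b)]$.  By the evident $A\leftrightarrow B$ symmetry, if you bracket in $\mathcal S$ as you propose, your case (b) tested on $(\sigma_B^\star(b_1),\sigma_B^\star(b_2))$ would yield (M8), not a tautology.

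Conversely, your case (c) tested on two core sections is trivially $0=0$, not a source of (M7): both sides of (B1) vanish there because every pairing involved is either zero or a pullback killed by a vertical vector field.  The correct allocation is: case (c) on a mixed pair gives the remaining one of (M7)/(M8), and case (c) on $(\sigma_B^\star(b_1),\sigma_B^\star(b_2))$ gives (M9).  In that last computation both sides are already of the form $\ell_c$ for sections $c\in\Gamma(C)$, so there is no pullback part to absorb with (M5); your placement of (M5) in the (M9) extraction is therefore also off.  If you rerun the computations with the corrected assignments the argument goes through exactly as in the paper.
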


In the proof of this proposition, we will use the following formulas.
Let $A$ and $A^*$ be a pair of Lie algebroids in duality. Then, for all
$a\in\Gamma(A)$ and $\alpha_1,\alpha_2\in\Gamma(A^*)$:
\[(\dr_{A^*}a)(\alpha_1,\alpha_2)=\rho_{A^*}(\alpha_1)\langle\alpha_2,
a\rangle-\rho_{A^*}(\alpha_2)\langle\alpha_1, a\rangle-\langle
[\alpha_1,\alpha_2]_{A^*}, a\rangle.
\]
For all $a_1,a_2\in\Gamma(A)$ and $\alpha_1,\alpha_2\in\Gamma(A^*)$,
we have 
\begin{equation*}
\begin{split}
 [\dr_{A^*}a_1,a_2]_A(\alpha_1,\alpha_2)&=-(\ldr{a_2}\dr_{A^*}a_1)(\alpha_1,\alpha_2)\\
&=-\ldr{\rho_A(a_2)}(\dr_{A^*}a_1(\alpha_1,\alpha_2))+\dr_{A^*}a_1(\ldr{a_2}\alpha_1,
\alpha_2)+\dr_{A^*}a_1(\alpha_1, \ldr{a_2}\alpha_2).
\end{split}
\end{equation*}

\begin{proof}
First choose $\alpha_1,\alpha_2\in\Gamma(A^*)$. We have $\dr_{D\duer
    A}[\alpha_1^\dagger,\alpha_2^\dagger]=0$. For
  $\beta_1,\beta_2\in\Gamma(B^*)$ and $a_1,a_2\in\Gamma(A)$, we find
  using Lemma \ref{useful_lie_der}
\begin{equation*}
\begin{split}
[\dr_{D\duer A}\alpha_1^\dagger, \alpha_2^\dagger](\beta_1^\dagger,\beta_2^\dagger)
=&\,-(\partial_A^*\alpha_2)^\uparrow(\dr_{D\duer A}\alpha_1^\dagger(\beta_1^\dagger,\beta_2^\dagger))
+\dr_{D\duer A}\alpha_1^\dagger(\ldr{\alpha_2^\dagger}\beta_1^\dagger,\beta_2^\dagger)\\
&\qquad +\dr_{D\duer A}\alpha_1^\dagger(\beta_1^\dagger,\ldr{\alpha_2^\dagger}\beta_2^\dagger)=0,\\
[\dr_{D\duer A}\alpha_1^\dagger, \alpha_2^\dagger](\sigma_A^\star(a_1),\beta_2^\dagger)
=&\,-(\partial_A^*\alpha_2)^\dagger (\dr_{D\duer A}\alpha_1^\dagger(\sigma_A^\star(a_1),\beta_2^\dagger))
+\dr_{D\duer A}\alpha_1^\dagger(\ldr{\alpha_2^\dagger}\sigma_A^\star(a_1),\beta_2^\dagger)\\
&\qquad +\dr_{D\duer A}\alpha_1^\dagger(\sigma_A^\star(a_1),\ldr{\alpha_2^\dagger}\beta_2^\dagger)
=0 
\end{split}
\end{equation*}
and
\begin{equation*}
\begin{split}
&[\dr_{D\duer A}\alpha_1^\dagger, \alpha_2^\dagger](\sigma_A^\star(a_1),\sigma_A^\star(a_2))\\
=&\,-(\partial_A^*\alpha_2)^\uparrow (\dr_{D\duer A}\alpha_1^\dagger(\sigma_A^\star(a_1),\sigma_A^\star(a_2)))
+\dr_{D\duer A}\alpha_1^\dagger(\ldr{\alpha_2^\dagger}\sigma_A^\star(a_1),\sigma_A^\star(a_2))\\
&\qquad+\dr_{D\duer
  A}\alpha_1^\dagger(\sigma_A^\star(a_1),\ldr{\alpha_2^\dagger}\sigma_A^\star(a_2))\\
=&\,-(\partial_A^*\alpha_2)^\uparrow q_{C^*}^*\left(-\rho_A(a_1)\langle\alpha_1,a_2\rangle+
  \rho_A(a_2)\langle\alpha_1,a_1\rangle
+\langle\alpha_1,[a_1,a_2]\rangle
\right)\\
&\qquad +\dr_{D\duer A}\alpha_1^\dagger(-\langle a_1,\nabla_\cdot^*\alpha_2\rangle^\dagger,\sigma_A^\star(a_2))+\dr_{D\duer
  A}\alpha_1^\dagger(\sigma_A^\star(a_1),-\langle
  a_2,\nabla_\cdot^*\alpha_2\rangle^\dagger)=0.
\end{split}
\end{equation*}
Thus, we have $\dr_{D\duer
  A}[\alpha_1^\dagger,\alpha_2^\dagger]=0=\left[\dr_{D\duer
    A}\alpha_1^\dagger,
  \alpha_2^\dagger\right]+\left[\alpha_1^\dagger, \dr_{D\duer
    A}\alpha_2^\dagger\right]$.  Choose now $\alpha\in\Gamma(A^*)$ and
$b\in\Gamma(B)$.  We have $\dr_{D\duer
  A}[\sigma_B^\star(b),\alpha^\dagger]=\dr_{D\duer
  A}(\nabla_b^*\alpha)^\dagger$, and so in particular
$\dr_{D\duer
  A}[\sigma_B^\star(b),\alpha^\dagger](\beta_1^\dagger,\beta_2^\dagger)=0$,
$\dr_{D\duer
  A}[\sigma_B^\star(b),\alpha^\dagger](\sigma_A^\star(a_1),\beta_2^\dagger)=0$
and 
\begin{equation*}
\begin{split}
&\dr_{D\duer
  A}[\sigma_B^\star(b),\alpha^\dagger](\sigma_A^\star(a_1),\sigma_A^\star(a_2))\\
&=q_{C^*}^*\left(
-\rho_A(a_1)\langle \nabla_b^*\alpha,a_2\rangle
+\rho_A(a_2)\langle \nabla_b^*\alpha,a_1\rangle
+\langle\nabla_b^*\alpha,[a_1,a_2]\rangle
\right)\\
&=-q_{C^*}^*\left(\dr_{A}(\nabla_b^*\alpha)(a_1,a_2)
\right).
\end{split}
\end{equation*}
On the other hand, we can check as above that
$[\dr_{D\duer A}
\sigma_B^\star(b),\alpha^\dagger](\beta_1^\dagger,\beta_2^\dagger)=0$,\linebreak
$[\dr_{D\duer A}
\sigma_B^\star(b),\alpha^\dagger](\sigma_A^\star(a_1),\beta_2^\dagger)=0$
and 
\begin{equation*}
\begin{split}
&[\dr_{D\duer A}
\sigma_B^\star(b),\alpha^\dagger](\sigma_A^\star(a_1),\sigma_A^\star(a_2))\\
=&\,
-(\partial_A^*\alpha)^\uparrow\left(\dr_{D\duer A} \sigma_B^\star(b)
  (\sigma_A^\star(a_1),\sigma_A^\star(a_2))\right) \\
&\quad
+\dr_{D\duer A} \sigma_B^\star(b)(\ldr{\alpha^\dagger}\sigma_A^\star(a_1),\sigma_A^\star(a_2))+\dr_{D\duer A}
\sigma_B^\star(b)(\sigma_A^\star(a_1),\ldr{\alpha^\dagger}\sigma_A^\star(a_2))\\
=&\,(\partial_A^*\alpha)^\dagger\nsp
  {[\sigma_A^\star(a_1),\sigma_A^\star(a_2)]}{ \sigma_B^\star(b)}\\
&\quad
+\dr_{D\duer A} \sigma_B^\star(b)(-\langle
  a_1,\nabla_\cdot^*\alpha\rangle^\dagger,\sigma_A^\star(a_2))+\dr_{D\duer A}
\sigma_B^\star(b)(\sigma_A^\star(a_1),-\langle
  a_2,\nabla_\cdot^*\alpha\rangle^\dagger)\\
=&
q_{C^*}^*\bigl(\langle\partial_A^*\alpha, R(a_1,a_2)b\rangle
+\rho_A(a_2) \langle a_1,\nabla_b^*\alpha\rangle
-\langle b, \nabla_{a_2}^*\langle
  a_1, \nabla_\cdot^*\alpha\rangle\rangle\\
&\quad -\rho_A(a_1) \langle a_2,\nabla_b^*\alpha\rangle
+\langle b, \nabla_{a_1}^*\langle
  a_2, \nabla_\cdot^*\alpha\rangle\rangle\bigr)\\
=&q_{C^*}^*\bigl(\langle\partial_A^*\alpha, R(a_1,a_2)b\rangle
+\langle
  a_1,\nabla_{\nabla_{a_2}b}^*\alpha\rangle 
-\langle a_2,\nabla_{\nabla_{a_1}b}^*\alpha\rangle\bigr).
\end{split}
\end{equation*}
Recall \eqref{bracket_core_linear} and \eqref{bracket_core_linear_2}.
Using this, we finally get
$\left[ \sigma_B^\star(b),\dr_{D\duer
    A}\alpha^\dagger\right](\beta_1^\dagger,\beta_2^\dagger)=0$, 
\begin{equation*}
\begin{split}
&\left[ \sigma_B^\star(b),\dr_{D\duer A}\alpha^\dagger\right](\sigma_A^\star(a_1),\beta_2^\dagger)=
\widehat{\nabla_b^*}\left(\dr_{D\duer A} \alpha^\dagger
  (\sigma_A^\star(a_1),\beta_2^\dagger)\right)\\
&\quad \qquad \qquad -\dr_{D\duer A} \alpha^\dagger
(\sigma_A^\star\left(\nabla_ba_1\right)+\widetilde{R(b,\cdot)a_1},\beta_2^\dagger)
-\dr_{D\duer A}\alpha^\dagger
(\sigma_A^\star(a_1),\ldr{b}\beta_2^\dagger)=0
\end{split}
\end{equation*}
and
\begin{align*}
&\left[ \sigma_B^\star(b),\dr_{D\duer A}\alpha^\dagger\right](\sigma_A^\star(a_1),\sigma_A^\star(a_2))\\
=&\widehat{\nabla_b^*}\left(\dr_{D\duer A} \alpha^\dagger
  (\sigma_A^\star(a_1),\sigma_A^\star(a_2))\right)-\dr_{D\duer A} \alpha^\dagger (\sigma_A^\star\left(\nabla_ba_1\right)+\widetilde{R(b,\cdot)a_1},\sigma_A^\star(a_2))\\
&\quad -\dr_{D\duer A}\alpha^\dagger
(\sigma_A^\star(a_1),\sigma_A^\star\left(\nabla_ba_2\right)+\widetilde{R(b,\cdot)a_2})\\
=&\,q_{C^*}^*\bigl(-\rho_B(b)(\dr_A\alpha(a_1,a_2))
+\rho_A(\nabla_ba_1)\langle\alpha,
a_2\rangle-\rho_A(a_2)\langle\nabla_ba_1,\alpha\rangle\\
&\quad -\langle\alpha, [\nabla_ba_1, a_2]\rangle+\rho_A(a_1)\langle
\nabla_ba_2,\alpha\rangle-\rho_A(\nabla_ba_2)\langle a_1,\alpha\rangle
-\langle \alpha, [a_1, \nabla_ba_2]\rangle
\bigr).
\end{align*}
We hence find that 
\[     \dr_{D\duer A}[\sigma_B^\star(b),\alpha^\dagger]  = \left[\dr_{D\duer A} \sigma_B^\star(b),\alpha^\dagger\right]+\left[ \sigma_B^\star(b),\dr_{D\duer A}\alpha^\dagger\right]
\]
if and only if 
\begin{equation*}
\begin{split}
&\dr_{A}(\nabla_b^*\alpha)(a_1,a_2)
+\langle\partial_A^*\alpha, R(a_1,a_2)b\rangle
+\langle
  a_1,{\nabla_{\nabla_{a_2}b}}^*\alpha\rangle 
-\langle a_2,\nabla_{\nabla_{a_1}b}^*\alpha\rangle\\
&-\rho_B(b)(\dr_A\alpha(a_1,a_2))
+\rho_A(\nabla_ba_1)\langle\alpha,
a_2\rangle-\rho_A(a_2)\langle\nabla_ba_1,\alpha\rangle -\langle\alpha, [\nabla_ba_1, a_2]\rangle\\
&\quad +\rho_A(a_1)\langle
\nabla_ba_2,\alpha\rangle-\rho_A(\nabla_ba_2)\langle a_1,\alpha\rangle
-\langle \alpha, [a_1, \nabla_ba_2]\rangle=0
\end{split}
\end{equation*}
for all $a_1,a_2\in\Gamma(A^*)$. This is 
\begin{equation*}
\begin{split}
&\langle\alpha, \partial_AR(a_1,a_2)b+\nabla_b[a_1,a_2]-[\nabla_ba_1,a_2]-[a_1,\nabla_ba_2]+\nabla_{\nabla_{a_1}b}a_2-\nabla_{\nabla_{a_2}b}a_1\rangle\\
+&\left([\rho_A(a_1),\rho_B(b)]-\rho_B(\nabla_{a_1}b)+\rho_A(\nabla_ba_1)\right)\langle \alpha, a_2\rangle\\
-&\left([\rho_A(a_2),\rho_B(b)]-\rho_B(\nabla_{a_2}b)+\rho_A(\nabla_ba_2)\right)\langle \alpha, a_1\rangle=0.
\end{split}
\end{equation*}
Hence, using (M5) twice, we obtain (M7) since $\alpha$ was arbitrary.

\medskip
  We conclude the proof of the theorem with the most technical formula. Choose
  $b_1,b_2\in\Gamma(B)$. We want to study the equation
\begin{equation}\label{last_condition}
\dr_{D\duer A}\left[\sigma_B^\star(b_1),\sigma_B^\star(b_2)\right]=\left[\dr_{D\duer A}
\sigma_B^\star(b_1),\sigma_B^\star(b_2)\right]+\left[ \sigma_B^\star(b_1), \dr_{D\duer A} \sigma_B^\star(b_2)\right].
\end{equation}
We have $\dr_{D\duer
  A}\left[\sigma_B^\star(b_1),\sigma_B^\star(b_2)\right]=\dr_{D\duer
  A}\left(\sigma_B^\star[b_1,b_2]+{R(b_1,b_2)^*}^\dagger\right)$ and we
find easily that both sides of \eqref{last_condition} vanish
on $\beta_1^\dagger,\beta_2^\dagger$, for $\beta_1, \beta_2\in\Gamma(B^*)$.
We have for $a\in\Gamma(A)$ and $\beta\in\Gamma(B^*)$:
\begin{equation*}
\begin{split}
&\dr_{D\duer
  A}\left(\sigma_B^\star[b_1,b_2]+\widetilde{R_{BA}(b_1,b_2)^*}\right)(\sigma_A^\star(a),\beta^\dagger)\\
=&\,
q_{C^*}^*\left(\rho_A(a)\langle [b_1,b_2],\beta\rangle
+\langle \partial_B^*\beta, R(b_1,b_2)a\rangle
-\langle[b_1,b_2], \nabla_a^*\beta\rangle
\right)\\
=&\,q_{C^*}^*\left(\langle \nabla_a[b_1,b_2],\beta\rangle
+\langle \partial_B^*\beta, R(b_1,b_2)a\rangle
\right)
\end{split}
\end{equation*}
and 
\begin{align*}
&\left[\dr_{D\duer A} \sigma_B^\star(b_1),\sigma_B^\star(b_2)\right](\sigma_A^\star(a),\beta^\dagger)=
-\widehat{\nabla_{b_2}^*}\left(\dr_{D\duer
    A}\sigma_B^\star(b_1)(\sigma_A^\star(a),\beta^\dagger)\right)\\
&\qquad\qquad+\dr_{D\duer
    A}\sigma_B^\star(b_1)\left(\sigma_A^\star\left(\nabla_{b_2}a\right)+\widetilde{R(b_2,\cdot)a},\beta^\dagger\right)\\
&\qquad\qquad+\dr_{D\duer
    A}\sigma_B^\star(b_1)\left(\sigma_A^\star(a),\ldr{b_2}\beta^\dagger\right)\\
\overset{\eqref{bracket_core_linear_2}}{=}&q_{C^*}^*\bigl(-\rho_B(b_2)\rho_A(a)\langle
b_1,\beta\rangle+\rho_B(b_2)\langle
b_1,\nabla_a^*\beta\rangle+\rho_A(\nabla_{b_2}a)\langle b_1, \beta\rangle
-\cancel{\langle\partial_B^*\beta, R(b_2,b_1)a\rangle}\\
&\,-\langle b_1, \nabla_{\nabla_{b_2}a}^*\beta\rangle
+\cancel{\langle \partial_B^*\beta, R(b_2,b_1)a\rangle}+\rho_A(a)\langle b_1,\ldr{b_2}\beta\rangle-\langle b_1,\nabla_a^*\ldr{b_2}\beta\rangle
\bigr)\\
=\,&q_{C^*}^*\bigl(-\rho_B(b_2)\langle
\nabla_ab_1,\beta\rangle+\langle \nabla_{\nabla_{b_2}a} b_1, \beta\rangle+\langle \nabla_a b_1,\ldr{b_2}\beta\rangle
\bigr)\\
=\,&q_{C^*}^*\bigl(-\langle
[b_2,\nabla_ab_1],\beta\rangle+\langle \nabla_{\nabla_{b_2}a}b_1, \beta\rangle\bigr).
\end{align*}
Thus, we find that the two sides of \eqref{last_condition} are equal on
$(\sigma_A^\star(a),\beta^\dagger)$ if and only if (M8) is satisfied.

Finally we consider $a_1,a_2\in\Gamma(A)$. 
We have 
\begin{align*}
&\dr_{D\duer
  A}\left(\sigma_B^\star[b_1,b_2]+\widetilde{R(b_1,b_2)^*}\right)(\sigma_A^\star(a_1),\sigma_A^\star(a_2))\\
=&\,-\widehat{\nabla_{a_1}^*}\ell_{R(b_1,b_2)a_2}
+\widehat{\nabla_{a_2}^*}\ell_{R(b_1,b_2)a_1}
+\ell_{R(b_1,b_2)[a_1,a_2]-R(a_1,a_2)[b_1,b_2]}=\ell_c
\end{align*}
where $c= {-\nabla_{a_1}(R(b_1,b_2)a_2)+\nabla_{a_2}(R(b_1,b_2)a_1)+R(b_1,b_2)[a_1,a_2]
-R(a_1,a_2)[b_1,b_2]}\in \Gamma(C)$, 
and 
\begin{align*}
&\left[\dr_{D\duer A}
  \sigma_B^\star(b_1),\sigma_B^\star(b_2)\right](\sigma_A^\star(a_1),\sigma_A^\star(a_2))=\,
-\widehat{\nabla_{b_2}^*}\left(\dr_{D\duer
    A}\sigma_B^\star(b_1)(\sigma_A^\star(a_1),\sigma_A^\star(a_2))\right)\\
&\qquad\qquad+\dr_{D\duer
    A}\sigma_B^\star(b_1)\left(\sigma_A^\star\left(\nabla_{b_2}a_1\right)+\widetilde{R(b_2,\cdot)a_1},\sigma_A^\star(a_2)\right)\\
&\qquad\qquad+\dr_{D\duer
    A}\sigma_B^\star(b_1)\left(\sigma_A^\star(a_1),
    \sigma_A^\star\left(\nabla_{b_2}a_2\right)+\widetilde{R(b_2,\cdot)a_2}\right)\\
\overset{\eqref{bracket_core_linear}}{=}\,&\widehat{\nabla_{b_2}^*}\ell_{R_{AB}(a_1,a_2)b_1}-\widehat{\nabla_{a_2}^*}\ell_{R(b_2,b_1)a_1}
-\nsp{\widetilde{R(\nabla_{b_2}a_1,
  a_2)^*}-\widetilde{{\nabla_{a_2}^{\rm
      Hom}}(R(b_2,\cdot)a_1)}}{\sigma_B^\star(b_1)}\\
&+\widehat{\nabla_{a_1}^*}\ell_{R(b_2,b_1)a_2}
+\nsp{ \widetilde{R(\nabla_{b_2}a_2, a_1)^*}
 -\widetilde{{\nabla_{a_1}^{\rm Hom}}(R(b_2,\cdot)a_2)}}{\sigma_B^\star(b_1)}\\
=\,\,&
\ell_{c_1+c_2},
\end{align*}
where
\begin{align*}
c_1&= {\nabla_{b_2}(R(a_1,a_2)b_1)-\nabla_{a_2}(R(b_2,b_1)a_1)-R(\nabla_{b_2}a_1,
  a_2)b_1+\langle b_1, {\nabla_{a_2}^{\rm Hom}}(R(b_2,\cdot)a_1)\rangle}\\
&=\nabla_{b_2}(R(a_1,a_2)b_1)+R(\nabla_{a_2} b_1,b_2)a_1-R(\nabla_{b_2}a_1,
  a_2)b_1,\\
c_2&= {\nabla_{a_1}(R(b_2,b_1)a_2)+R(\nabla_{b_2}a_2,
  a_1)b_1-\langle b_1, {\nabla_{a_1}^{\rm Hom}}R(b_2,\cdot)a_2 \rangle}\\
&=R(\nabla_{a_1} b_1, b_2)a_2-R(a_1,\nabla_{b_2}a_2)b_1.
\end{align*}

Hence, we find that the two sides of \eqref{last_condition}
coincides on $(\sigma_A^\star(a_1),\sigma_A^\star(a_2))$ if and only
if (M9) is satisfied.
\end{proof}

\newpage

\def\cprime{$'$} \def\polhk#1{\setbox0=\hbox{#1}{\ooalign{\hidewidth
  \lower1.5ex\hbox{`}\hidewidth\crcr\unhbox0}}} \def\cprime{$'$}
\providecommand{\bysame}{\leavevmode\hbox to3em{\hrulefill}\thinspace}
\providecommand{\MR}{\relax\ifhmode\unskip\space\fi MR }
\providecommand{\MRhref}[2]{%
  \href{http://www.ams.org/mathscinet-getitem?mr=#1}{#2}
}
\providecommand{\href}[2]{#2}

\end{document}